\crefname{hypothesis}{Hypothesis}{Hypotheses}
\title{The Challenges of Optimization for Data Science 
\thanks
{
  Submitted to the editors \today.
  \funding{}
}
}
\author{
Christian Varner\thanks{Department of Statistics, University of Wisconsin Madison, Madison, WI 
  (\email{cvarner@wisc.edu}, \email{vivak.patel@wisc.edu}).}
\and 
Vivak Patel\footnotemark[2]}
\theoremstyle{plain}
\newtheorem{property}{Property}
\pgfplotsset{compat=1.15}
\pgfplotsset{boxplot legend/.style={
    legend image code/.code={
        \draw[#1] (0cm,0cm) rectangle (0.6cm,0.3cm)
        (0.3cm,0cm) -- (0.3cm,-0.1cm) (0.1cm,-0.1cm) -- (0.5cm,-0.1cm)
        (0.3cm,0.3cm) -- (0.3cm,0.4cm) (0.1cm,0.4cm) -- (0.5cm,0.4cm);
    },
}}
\newenvironment{breakablealgorithm}
  {
   \begin{center}
     \refstepcounter{algorithm}
     \hrule height.8pt depth0pt \kern2pt
     \renewcommand{\caption}[2][\relax]{
       {\raggedright\textbf{\fname@algorithm~\thealgorithm} ##2\par}%
       \ifx\relax##1\relax 
         \addcontentsline{loa}{algorithm}{\protect\numberline{\thealgorithm}##2}%
       \else 
         \addcontentsline{loa}{algorithm}{\protect\numberline{\thealgorithm}##1}%
       \fi
       \kern2pt\hrule\kern2pt
     }
  }{
     \kern2pt\hrule\relax
   \end{center}
  }
\crefname{assumption}{assumption}{assumptions}
\crefname{property}{property}{properties}
\crefname{definition}{definition}{definitions}
\newcommand{\defeq}{\vcentcolon=}
\newcommand{\inlinenorm}[1]{ \Vert #1 \Vert}
\newcommand{\norm}[1]{\left\Vert #1 \right\Vert}
\newcommand{\bigpar}[1]{\left( #1 \right)}
\begin{document}

\maketitle

\begin{abstract}
Optimization problems arising in data science have given rise to a number of new derivative-based optimization methods. 
Such methods often use standard smoothness assumptions---namely, global Lipschitz continuity of the gradient function---to establish a convergence theory. 
Unfortunately, in this work, we show that common optimization problems from data science applications are not globally Lipschitz smooth, nor do they satisfy some more recently developed smoothness conditions in literature. 
Instead, we show that such optimization problems are better modeled as having locally Lipschitz continuous gradients. 
We then construct explicit examples satisfying this assumption on which existing classes of optimization methods are either unreliable or experience an explosion in evaluation complexity. 
In summary, we show that optimization problems arising in data science are particularly difficult to solve, and that there is a need for methods that can reliably and practically solve these problems.
\end{abstract}

\begin{keywords}
    Gradient Methodologies, Nonconvex, Local Lipschitz Smoothness, Data Science
\end{keywords}

\begin{MSCcodes}
90C30, 65K05, 68T09
\end{MSCcodes}

\section{Introduction} \label{sec:introduction}
Optimization problems arising in data science have resulted in a surge of new optimization methods that are designed for the particular computational characteristics of such problems. 
These optimization methods also come with impressive computational complexity results, often claiming optimal complexity for a particular class of optimization problems \cite[for an introduction, see][\S 1.2.5, \S 1.2.6, \S 2.5]{cartis2022}. 
While these results are undeniable, they typically require optimization problems to have gradient functions that are globally Lipschitz continuous.\footnote{While we will be precise later, a globally Lipschitz continuous function is a function that is Lipschitz continuous with common rank over any compact set.} 
Unfortunately, global Lipschitz continuity of the gradient function seems to be inapplicable for some of the most common data science problems; for example, feed forward networks, recurrent neural networks, and certain types of regression do not always satisfy this condition \cite[see][Propositions 1 to 3]{patel2022globalconvergence}.
Furthermore, losing global Lipschitz continuity of the gradient function can have serious consequences on the reliability of optimization methods. For example, gradient descent with diminishing step sizes will globally converge for objective functions with globally Lipschitz continuous gradients \cite[Proposition 1.2.4]{bertsekas1999}; but, for objective functions with a \emph{locally} Lipschitz continuous gradient function, this method can produce iterates with a diverging optimality gap and gradient values that are uniformly bounded away from zero \cite[Proposition 4.4]{patel2023gradient}.

This motivates the need to accurately model the continuity conditions for optimization problems arising in data science. While local Lipschitz continuity of the gradient function seems reasonable \cite[see][Propositions 1 to 3]{patel2022globalconvergence}, other measures of continuity of the gradient function---falling between global Lipschitz continuity of the gradient function and local Lipschitz continuity of the gradient function---have recently appeared in the literature \cite{zhang2020gradientclipping,patel2022globalconvergence,li2023convex}. Such alternative measures of continuity seem to yield better convergence behavior for some optimization methods \cite{zhang2020gradientclipping,patel2022globalconvergence,li2023convex}; however, it is not understood if these intermediate conditions are realistic for optimization problems arising in data science, and whether there are consequences for optimization algorithms when these intermediate conditions are not satisfied. 

\begin{table}[ht!]
\centering
\caption{Below are canonical data science problems, a reference for the problem, and different smoothness conditions: globally Lipschitz smooth (GRA); globally Lipschitz Hessian smooth (HES); subquadratically Lipschitz smooth (SUB); quadratically Lipschitz smooth (QUA); and local Lipschitz continuity of the gradient (LOC). We indicate ``yes'' if the problem satisfies the condition and ``no'' if not. Details can be found in \cref{subsec:smoothness-overview}.}
\label{table-data-science-problems}
\footnotesize
\begin{tabular}{@{}>{\raggedright}p{2in}lllllll@{}} \toprule
\textbf{Problem} & \textbf{Ref.} & \textbf{GRA} & \textbf{HES} & \textbf{SUB} & \textbf{QUA} & \textbf{LOC} & \textbf{Details} \\ \midrule
Factor Analysis & \cite[\S 4]{lawley1962factor} & No & No & No & Yes & Yes & \Cref{subsec:factor-analysis}\\
Feed Forward Neural Network & \cite[Chapter 8]{rumelhart1987} & No & No & No & Yes & Yes & \Cref{subsec:classification-nn}\\
Generalized Estimating Equations & \cite[Chapter 3]{lipsitz2008geelongitudinal} & No & No & No & Yes & Yes & \Cref{subsec:gee}\\
Inverse Gaussian Regression & \cite[Chapter 8]{chhikara1989} & No & No & No & No & Yes & \Cref{subsec:inv-gaussian-reg}
\\\bottomrule
\end{tabular}
\end{table}

We address these open questions by first analyzing the applicability of key intermediate measures of continuity for common data science problems (see \Cref{table-data-science-problems}).
We conclude that these intermediate measures of continuity of the gradient function \textit{do not} always apply to optimization problems arising in data science (details are in \cref{sec:smoothness}). 
In light of this, we consider the possible problematic behavior of optimization methods under the \textit{applicable} assumption of local Lipschitz continuity of the gradient function. 
By working with two broad categorizations of derivative-based optimization methods, we construct two building block functions (see \cref{subsec:cd-preliminaries,subsec:exploding-obj-eval-preliminaries})---improving on instances in previous works \cite{vavasis1993,cartis2022,patel2023gradient}---that allow us to generate explicit examples for which one category of optimization methods will diverge (see \cref{table-objective-free}), while the other will incur an exponential growth in objective function evaluations between accepted iterates (see \cref{table-multiple-objective-evaluation}).
In other words, we show, under assumptions appropriate for optimization problems arising in data science, existing methods face serious practical challenges. 

\begin{table}[ht]
\centering
\caption{Gradient methods that do not use objective function information. The method, convergence result reference(s), and a reference to a divergence example are listed.}
\label{table-objective-free}
\footnotesize
\begin{tabular}{@{}>{\raggedright}p{2in}>{\raggedright}p{3in}l@{}} \toprule
\textbf{Method} & \textbf{Convergence} & \textbf{Divergence} \\ \midrule
Diminishing Step-Size & \cite[Proposition 1.2.4]{bertsekas1999}, \cite[Theorem 3.7]{patel2023gradient} & \cite[Proposition 4.4]{patel2023gradient} \\
Constant Step-Size & \cite[Corollary 1]{armijo1966minimization}, \cite[Theorem 3]{zhang2020gradientclipping}, \cite[Theorem 5.1]{li2023convex} & \Cref{subsec:ce-constant} \\
Barzilai-Borwein Methods & \cite[\S 4]{barzilai1988}, \cite[Theorem 3.3]{burdakov2019stabilized} & \Cref{subsec:ce-bbb} \\
Nesterov's Acceleration Method & \cite[Theorem 6]{nesterov2012GradientMF}, \cite[Theorem 4.4]{li2023convex} & \Cref{subsec:ce-nag} \\
Bregman Distance Method & \cite[Theorem 1]{bauschke2017descentlemma}	&  \Cref{subsec:ce-bregman}\\
Negative Curvature Method & \cite[Theorem 1]{curtis2018exploiting} & \Cref{subsec:ce-negative}\\
Lipschitz Approximation & \cite[Theorem 1]{malitsky2020adaptive} & \Cref{subsec:ce-lipschitz}\\
Weighted Gradient-Norm Damping & \cite[Theorem 2.3]{wu2020wngrad}, \cite[Corollary 1]{grapiglia2022AdaptiveTrust} & \Cref{subsec:ce-weighted}\\
Adaptively Scaled Trust Region & \cite[Corollary 3.3]{gratton2022firstorderOFFO}, \cite[Theorem 3.10]{gratton2022firstorderOFFO}, \cite[Theorem 3.3]{gratton2023OFFOSecondOrderOpt} & \Cref{subsec:ce-adaptiveTR}\\ \bottomrule
\end{tabular}
\end{table}

Owing to this work, we have rigorously specified appropriate assumptions for smooth optimization problems arising in data science (which have not been considered in the established literature), and have shown how to construct general purpose examples satisfying these assumptions for which optimization methods can experience impractical behavior (which can be used to analyze emerging methods). Importantly, we hope that this work motivates a deeper exploration of the properties of optimization problems arising in data science, and encourages new methods that are reliable and practical for optimization problems in data science \cite[e.g.,][]{varner2023novelgradient}.

\begin{table}[ht]
\centering
\caption{Gradient methods that may use multiple objective function evaluations per accepted iterate. For each method, a short description, a convergence result reference, and an exploding evaluation example are given.}
\label{table-multiple-objective-evaluation}
\footnotesize
\begin{tabular}{@{}>{\raggedright}p{2.75in}>{\raggedright}p{2in}l@{}} \toprule
\textbf{Method} & \textbf{Convergence} & \textbf{Exploding Evaluations} \\ \midrule
Armijo's Backtracking Method & \cite[Corollary 2]{armijo1966minimization}, \cite[Theorem 5]{zhang2020firstorder} & \Cref{subsec:armijo-backtracking}\\
Newton's Method with Cubic Regularization & \cite[Theorem 1]{nesterov2006cubic} & \Cref{subsec:cubic-newton-method}\\
Adaptive Cubic Regularization &  \cite[Theorem 2.5]{cartis2011cubic1}, \cite[Theorem 2.1]{cartis2011cubic2} & \Cref{subsec:adaptive-cubic-reg}\\
Lipschitz Constant(s) Line Search Methods & \cite[Theorem 3]{nesterov2012GradientMF}, \cite[Theorem 2]{curtis2018exploiting} & \Cref{subsec:lipschitz-line-search}\\
\bottomrule
\end{tabular}
\end{table}

\section{Optimization Problems for Common Data Science Applications} \label{sec:smoothness}
Optimization problems arising in data science are commonly formulated as
\begin{equation}
\min_{\theta} F(\theta),
\end{equation}
for an objective function $F:\mathbb{R}^n \to \mathbb{R}$, such that $F(\theta)$ is bounded from below, and its gradient, $\dot F(\theta) \defeq \nabla F(\psi) |_{\psi = \theta}$, exists everywhere and is continuous. In addition, when applicable, we let $\ddot F(\theta) = \nabla^2 F(\psi) |_{\psi=\theta}$ and $\dddot F(\theta) = \nabla^3 F(\psi) |_{\psi=\theta}$. In all examples where $\dddot F(\theta)$ is used, it is a scalar quantity. 
In what follows, we explore the continuity properties of $F(\theta)$'s that arise in data science applications. We begin by establishing definitions about continuity.

\subsection{Notions of Continuity} \label{subsec:smoothness-overview}
Below, we present several definitions of continuity of $\dot F$ and $\ddot F$ that appear in literature. As these notions of continuity are still being developed, there is not one standard presentation, but we try to stay consistent with related work. While we can present most of these definitions without assuming the existence and continuity of $\ddot F$, we will do so to uniformize the discussion. All norms that follow are arbitrary, and when needed, we can restrict these definitions to subsets of $\mathbb{R}^n$. We now provide the various definitions of continuity.

\begin{definition} \label{def-grad-globally-lip-cont}
    $\dot F$ is globally Lipschitz continuous if $\exists \mathcal{L}_G \geq 0$ such that $\forall \psi \in \mathbb{R}^n, ~\inlinenorm{ \ddot F(\psi)} \leq \mathcal{L}_G$. We say that $F$ is globally Lipschitz smooth.
\end{definition}

\begin{definition} \label{def-hes-globally-lip-cont}
    $\ddot F$ is globally Lipschitz continuous if $\exists \mathcal{L}_H \geq 0$ such that $\forall \psi, \phi \in \mathbb{R}^n$, $\inlinenorm{ \ddot F(\psi) - \ddot F(\phi) } \leq \mathcal{L}_H\inlinenorm{ \psi - \phi}$. We say that $F$ is globally Lipschitz Hessian smooth.
\end{definition}
\begin{remark}
    If $F(\theta)$ is third order continuously differentiable, an equivalent definition of globally Lipschitz Hessian smooth is if $\forall\psi\in\mathbb{R}^n, ~\inlinenorm{\dddot F(\psi)} \leq \mathcal{L}_H$ \cite[see][Theorem A.8.5]{cartis2022}.
\end{remark}

\begin{definition} \label{def-grad-subquadratic-lip-cont}
    $\dot F$ is subquadratically Lipschitz continuous if 
    \begin{equation}
    \limsup_{k\to\infty} \inlinenorm{ \ddot F(\theta_k)} / \inlinenorm{ \dot F(\theta_k)}^2 = 0
    \end{equation}
    for any sequence, $\lbrace \theta_k:k \in \mathbb{N} \rbrace$, such that $\lim_{k \to \infty} \inlinenorm{ \dot F(\theta_k)} = \infty$. We say that $F$ is subquadratically Lipschitz smooth. See \cite{li2023convex}.
\end{definition}

\begin{definition} \label{def-grad-quadratic-lip-cont}
    $\dot F$ is quadratically Lipschitz continuous if 
    \begin{remunerate}
        \item[(A)] for any sequence, $\lbrace \theta_k:k \in \mathbb{N} \rbrace$, such that $\lim_{k \to \infty} \inlinenorm{ \dot F(\theta_k)} = \infty$
        \begin{equation}
        \limsup_{k\to\infty} \inlinenorm{ \ddot F(\theta_k)} / \inlinenorm{ \dot F(\theta_k) }^2 < \infty;
        \end{equation}
        \item[(B)] there exists one such sequence such that $\liminf_{k\to\infty} \inlinenorm{ \ddot F(\theta_k)} / \inlinenorm{ \dot F(\theta_k) }^2 > 0$.
    \end{remunerate}
    We say that $F$ is quadratically Lipschitz smooth in this case.
    
\end{definition}

\begin{definition} \label{def-grad-superquadratic-lip-cont}
    $\dot F$ is superquadratically Lipschitz continuous if there exists a sequence, $\lbrace \theta_k : k \in \mathbb{N} \rbrace$, for which 
    \begin{equation}
        \lim_k \inlinenorm{ \dot F(\theta_k)} = \infty \text{\quad and \quad } \limsup_k \inlinenorm{ \ddot F(\theta_k) }/ \inlinenorm{ \dot F(\theta_k) }^2 = \infty.
    \end{equation}
\end{definition}

\begin{definition} \label{def-grad-locally-lip-cont}
    $\dot F$ is locally Lipschitz continuous if for any compact set $C \subset \mathbb{R}^n$, $\exists \mathcal{L} \geq 0$ such that $\sup_{\psi \in C} \inlinenorm{ \ddot F(\psi)} \leq \mathcal{L}$. We say that $F$ is locally Lipschitz smooth. 
\end{definition}

These definitions can be related. In this context, any function $F$ that is globally Lipschitz smooth, globally Lipschitz Hessian smooth, subquadratically, quadratically, or superquadratically Lipschitz smooth is locally Lipschitz smooth \cite[see][Lemma B.1]{patel2023gradient}. Furthermore, functions that are subquadratically Lipschitz smooth \textit{cannot} be quadratically Lipschitz smooth; functions that are quadratically Lipschitz smooth cannot be subquadratically Lipschitz smooth; and, any function that is superquadratically Lipschitz smooth cannot be subquadratically or quadratically Lipschitz smooth (and vice versa). Therefore, we view \cref{def-grad-locally-lip-cont} as the least restrictive definition, and \cref{def-grad-globally-lip-cont} the most restrictive.

We now contextualize the above definitions of continuity with several examples. We will begin with an example from factor analysis, then analyze specific instances of feed forward neural networks, estimation in generalized estimating equations, and Inverse Gaussian regression. A short introduction will be provided for each, followed by an analysis of smoothness properties.

\subsection{Factor Analysis}\label{subsec:factor-analysis}
Factor analysis is a statistical method for data reduction and pattern extraction, often employed to identify a set of latent variables (usually, of lower dimension) to explain the observations. That is, given independent centered observations, $\lbrace X_1,\ldots,X_N \rbrace \subset \mathbb{R}^m$, a factor analysis posits $X_i \sim \mathcal{N}(0, [WW^\intercal + V]^{-1})$, where $W$ usually has fewer than $m$ columns and $V$ is a diagonal matrix. Then, a factor analysis proceeds by estimating $W$ and $V$ using maximum likelihood estimation.

Consider the case of just one centered observation, $x \in \mathbb{R}$; and, in our model, we set $W = 0$ and we let $\theta^2 := V$ be the only entry of $V$. Then, the negative log-likelihood for this model is the objective function: $F:\mathbb{R}_{> 0} \to \mathbb{R}$, given by
\begin{equation}
F(\theta) = \frac{1}{2} \log (2\pi) - \frac{1}{2} \log(\theta^2) + \frac{1}{2}\theta^2 x^2.
\end{equation}
A direct computation of the gradient and Hessian functions yields,
\begin{equation}
\dot F(\theta) = -\frac{1}{\theta} + \theta x^2,\quad\text{and}\quad \ddot F(\theta) = \frac{1}{\theta^2} + x^2.
\end{equation}

Using these calculations, it is easy to verify that $F$ is \textbf{not} globally Lipschitz smooth nor globally Lipschitz Hessian smooth. We show that $F$ is quadratically Lipschitz smooth. First, we determine all sequences $\lbrace \theta_k \rbrace$ such that $|\dot F(\theta_k) | \to \infty$. We see that this occurs for any sequence that tends to zero or infinity (when $x \neq 0$). Second, by direct calculation,
\begin{equation}
\frac{|\ddot F(\theta) |}{\dot F(\theta)^2} = \frac{\theta^{-2} + x^2}{\theta^{-2} - 2x^2 + \theta^2 x^2} = \frac{1 + \theta^2 x^2}{1 - 2x^2\theta^2 + \theta^4 x^2}.
\end{equation}
When $\lbrace \theta_k \rbrace$ tends to zero, this ratio converges to $1$. When $\lbrace \theta_k \rbrace$ tends to infinity, this ratio converges to $0$. Hence, $F$ is quadratically Lipschitz smooth.

\subsection{Feed Forward Neural Network}\label{subsec:classification-nn}
In this section, we focus on the simplified example of a one dimensional four layer feed forward neural network with linear activations except for the last layer which is a sigmoid. Related information can be found in \cite[Appendix A.2]{patel2022globalconvergence}. The dataset for classification is $\{(1,1), (0,0)\}$ (each occurring with equal probability), where the first element of each tuple is the label, and the second element is the feature. We train the network using binary cross-entropy loss. The resulting risk function, which is the objective function, is (up to an additive constant)
\begin{equation} \label{eq:binary-cross-entropy}
    F(\theta) = \log(1+\exp\bigpar{{-w_4w_3w_2w_1}}),
\end{equation}
where $\theta \in \mathbb{R}^4$ is the vector whose entries are $w_1,w_2,w_3,$ and $w_4$.

The gradient and hessian of \cref{eq:binary-cross-entropy} are
\begin{equation}
    \begin{aligned}
        \dot{F}(\theta)
        &= \frac{-1}{\exp(w_4w_3w_2w_1) + 1} 
        \begin{bmatrix}
            w_4w_3w_2\\
            w_4w_3w_1\\
            w_4w_2w_1\\
            w_3w_2w_1
        \end{bmatrix},
    \end{aligned}
\end{equation}
and
\begin{equation}
    \begin{aligned}
        \ddot{F}(\theta) 
        &= \frac{-1}{\exp(w_4w_3w_2w_1)+1} 
        \begin{bmatrix}
            0 & w_3w_4 & w_4w_2 & w_3w_2 \\
            w_3w_4 & 0 & w_1w_4 & w_1w_3\\
            w_4w_2 & w_1w_4 & 0 & w_2w_1 \\
            w_3w_2 & w_1w_3 & w_2w_1 & 0
        \end{bmatrix}\\
        & + \frac{\exp(w_4w_3w_2w_1)}{\bigpar{\exp(w_4w_3w_2w_1) + 1}^2}
        \begin{bmatrix}
            w_4w_3w_2\\
            w_4w_3w_1\\
            w_4w_2w_1\\
            w_3w_2w_1
        \end{bmatrix} 
        \begin{bmatrix}
            w_4w_3w_2\\
            w_4w_3w_1\\
            w_4w_2w_1\\
            w_3w_2w_1
        \end{bmatrix}^\intercal
.
    \end{aligned}
\end{equation}

We now consider each type of smoothness outlined above. 
\begin{remunerate}
\item The Frobenius norm of the Hessian is at least the absolute value of its $(1,1)$ entry. This entry becomes $w_2^6/4$ when evaluated at $w_1 = 0$ and $w_2 = w_3 = w_4$ is $w_2^6/4$, which becomes unbounded as $w_2 \to \infty$. Hence, $F$ is \textbf{not} globally Lipschitz smooth.
\item $\ddot F(0) = 0$. Hence, using the Frobenius norm, $\inlinenorm{ \ddot F(\theta) - \ddot F(0) } = \inlinenorm{ \ddot F(\theta) }$, which is at least the absolute value of the $(1,1)$ entry of $\ddot F(\theta)$. Consider $\theta$ specified by $w_1 = 0$ and $w_2=w_3=w_4$, then
\begin{equation}
\frac{\inlinenorm{ \ddot F(\theta) - \ddot F(0) }}{ \sqrt{ 3w_2^2 - 0}} \geq \frac{w_2^6}{4 \sqrt{ 3w_2^2}},
\end{equation}
where we have used the Euclidean distance between $\theta$ and $0$ is the denominator. The right hand side of the inequality diverges as $w_2 \to \infty$. Thus, $F$ is \textbf{not} globally Lipschitz Hessian smooth.
\item We again use the Frobenius norm for the Hessian and the Euclidean distance for the vectors. Consider $\theta$ with $w_1 = 0$ and $w_2 = w_3 = w_4$. Then, just as before,
\begin{equation}
\frac{ \inlinenorm{ \ddot F(\theta) } }{ \inlinenorm { \dot F (\theta) }^2} \geq \frac{w_2^6}{w_2^6 } = 1.
\end{equation}
Hence, $F$ is \textbf{not} subquadratically Lipschitz smooth.
\item With the same norms as above, 
\begin{equation}
\inlinenorm{ \dot F(\theta) }^2 = \left( \frac{1}{1 + \exp(w_4 w_3 w_2 w_1)}  \right)^2 \left( (w_4 w_3 w_2)^2 + (w_4 w_3 w_1)^2 + (w_4 w_2 w_1)^2 + (w_3 w_2 w_1)^2 \right).
\end{equation}
In order for this quantity to diverge, $w_4w_3w_2w_1 \leq 0$. Moreover, at most only one of $\lbrace w_1,w_2,w_3,w_4 \rbrace$ can be zero, while at least one must diverge to infinity. 

Hence, since $w_4w_3w_2w_1 \leq 0$ for $\inlinenorm{ \dot F(\theta)}^2$ to diverge, then, eventually,
\begin{equation}
\inlinenorm{ \dot F(\theta) }^2 \geq \frac{(w_4 w_3 w_2)^2 + (w_4 w_3 w_1)^2 + (w_4 w_2 w_1)^2 + (w_3 w_2 w_1)^2}{2}.
\end{equation}

Moreover, using $w_4w_3w_2w_1 \leq 0$ and the triangle inequality,
\begin{equation}
\inlinenorm{ \ddot F(\theta) } \leq \sqrt{ \sum_{i \neq j} (w_i w_j)^2 } + (w_4 w_3 w_2)^2 + (w_4 w_3 w_1)^2 + (w_4 w_2 w_1)^2 + (w_3 w_2 w_1)^2.
\end{equation}

Hence, when $\inlinenorm{ \dot F(\theta) }$ diverges,
\begin{equation}
\frac{\inlinenorm{ \ddot F(\theta) } }{ \inlinenorm{ \dot F(\theta) }^2 }
\leq \frac{\sqrt{ \sum_{i \neq j} (w_i w_j)^2 }}{(w_4 w_3 w_2)^2 + (w_4 w_3 w_1)^2 + (w_4 w_2 w_1)^2 + (w_3 w_2 w_1)^2} + 2.
\end{equation}

Now, since at most three terms can diverge and at most one term can be zero, the limit of the right hand side is two. Hence, $F$ is quadratically Lipschitz continuous.
\end{remunerate}


\subsection{Generalized Estimating Equations}\label{subsec:gee}
We now consider objective functions arising in Generalized Estimating Equations. This modeling framework is a technique often employed in clustered data analysis, like repeated measurements in biomedical studies \cite[Chapter 3]{lipsitz2008geelongitudinal}. These methods often estimate parameters, $\theta$, by solving for a root in an \emph{estimating equation}; however, this estimation procedure is ambiguous when there are multiple roots, so it is convenient to consider the estimating equation as the gradient of a function (when applicable), and to instead treat estimation as a minimization problem. As a simple example, consider an observation $y \in \mathbb{R}_{> 0}$ estimated by a parameter $\theta > 0$ using the estimating equation,
\begin{equation} \label{eq-simple-ee}
\dot F(\theta) = -\frac{y - \sqrt{\theta}}{\theta}.
\end{equation}
Then, the root of this equation correspond to the minimizer of the function (up to an additive constant)
\begin{equation}
F(\theta) = -y \log( \theta) + 2 \sqrt{\theta},
\end{equation}
whose Hessian is
\begin{equation}
\ddot F(\theta) = \frac{ y - 0.5 \sqrt{\theta}}{\theta^2}.
\end{equation}

Using the above calculations, it is easy to verify that $F$ is \textbf{not} globally Lipschitz smooth, as the Hessian is unbounded as $\theta$ approaches $0$. Furthermore, for any $\theta \in (0, 4y^2)$,
\begin{equation}
\frac{| \ddot F(\theta) - \ddot F(4y^2) | }{|\theta - 4y^2|} = \frac{|\ddot F(\theta)| }{|\theta - 4y^2|} \geq \frac{ |y - 0.5 \sqrt{\theta}|}{4y^2\theta^2}.
\end{equation}
Therefore, $F$ is \textbf{not} globally Lipschitz Hessian smooth. We now show that $F$ is quadratically Lipschitz smooth. It is readily seen that
\begin{equation}
    \frac{|\ddot F(\theta)|}{\dot F(\theta)^2} = \frac{ |y - 0.5 \sqrt{\theta}|}{\theta^2} \frac{\theta^2}{(y - \sqrt{\theta})^2} = \frac{|y - 0.5 \sqrt{\theta}|}{(y - \sqrt\theta)^2}.
\end{equation}
The only sequence $\{\theta_k\}$ such that $|\dot F(\theta_k)|$ diverges are those that approach $0$. The preceding ratio tends to $1/y$ on those sequences. Therefore, $F$ is not subquadratically Lipschitz smooth (as $1/y > 0$), but is quadratically Lipschitz smooth.


\subsection{Inverse Gaussian Regression} \label{subsec:inv-gaussian-reg}
Lastly, in this section we consider an example from Inverse Gaussian regression. Inverse Gaussian modeling is a popular tool in many statistical applications, including survival analysis and reliability testing \cite{jayalath2020}. As a basic example, consider an observation $y > 0$ drawn from an Inverse Gaussian distribution with mean $\mu > 0$ and scale parameter $\phi = 1$. Consider the generalized linear model with canonical link and one parameter, $\theta \in \mathbb{R}_{<0}$ (i.e., only an intercept). The negative log-likelihood, and the first three derivatives are
\begin{equation}
    \begin{aligned}
        &F(\theta) = - ( \theta y + (-2\theta)^{1/2} ) - \frac{1}{2y} - \frac{1}{2} \ln(2\pi y^3),\\
        &\dot F(\theta) = - ( y - (-2\theta)^{-1/2} ), ~\ddot F(\theta) = (-2\theta)^{-3/2} 
        \text{, and } \dddot F(\theta) = 3(-2\theta)^{-5/2}.
    \end{aligned} 
\end{equation}
Using the above derivatives we now check the smoothness properties of $F$. It is apparent from the Hessian and third derivative that $F$ is \textbf{not} globally Lipschitz smooth \textbf{nor} globally Lipschitz Hessian smooth. We now show that $F$ is superquadratically Lipschitz smooth. It readily follows that
\begin{equation}
    \frac{|\ddot F(\theta)|}{\dot F(\theta)^2} = \frac{1}{y^2(-2\theta)^{3/2} - 2y(-2\theta) + (-2\theta)^{1/2}}.
\end{equation}
We must show that there exists a sequence $\{\theta_k\}$ such that $|\dot F(\theta_k)|$ and $|\ddot F(\theta_k)|$ diverge, and the preceding ratio also diverges. Any sequence such that $\theta_k \to 0$ satisfies this property. Hence, $F$ is \textbf{not} subquadratically or quadratically Lipschitz smooth, but is in fact superquadratically Lipschitz smooth.

We comment that the case of intercept only model with multiple data points with known shared scale parameter can be handled similarly. Furthermore, this example is notable in the sense that many statistical tests are done with reference to an intercept only model.



\subsection{In Summary}
In the examples above, it is shown that under simplified, yet realistic scenarios that common optimization problems arising in data science \emph{do not} always satisfy intermediate smoothness conditions, and commonly only satisfy the least restrictive definitions (\cref{def-grad-quadratic-lip-cont,def-grad-superquadratic-lip-cont,def-grad-locally-lip-cont}). We note that in more complex scenarios, one should cautiously assume that the continuity properties \emph{do not} become more favorable, and should be skeptical of the claimed convergence or complexity behavior of many optimization methods. In fact as we will shortly illustrate, by not considering the more appropriate and realistic smoothness conditions, the consequences can be catastrophic for many traditional and modern approaches to optimization in data science problems. 

\section{Divergence Examples for Gradient Methods without Objective Evaluations} \label{sec:catastophic-divergence}
In this section, we consider the consequences for optimization algorithm that \emph{never} evaluate the objective function when the gradient only satisfies the less restrictive local Lipschitz continuity property (see \cref{def-grad-locally-lip-cont}). We will show that such methods can experience \emph{catastrophic divergence}, where the optimality gap diverges, and the gradient will often stay bounded away from $0$. This illustrates that such methods can experience dire reliability problems under realistic conditions for data science problems. We will begin by constructing a class of objective functions in \cref{subsec:cd-preliminaries} which will often be employed in our constructions, then proceed to show that an extensive list of methods experience catastrophic divergence.

\subsection{Preliminaries} \label{subsec:cd-preliminaries}
As many optimization problems arising in data science can be formulated with objectives that are bounded below and have continuous derivatives (see \cref{sec:smoothness}), we will primarily construct objective functions to satisfy the following two conditions

\begin{property} \label{as-bounded below}
    The objective function, $F$, is bounded below by some constant $F_{l.b.} > -\infty$.
\end{property}

\begin{property} \label{as-loc-lip-cont}
    $\forall \theta \in \mathbb{R}^n$, the gradient function $\dot F(\theta) := \nabla F(\psi) \vert_{\psi=\theta}$ exists and is locally Lipschitz continuous.
\end{property}

While not always, we will often use the following function (as pieces of another function) to construct the objective in our examples. Let $m > 0$ and $d, \delta \in (0,1]$, and define

\begin{equation} \label{eqn-cd-building-function}
f(\theta; m, d, \delta) = \begin{cases}
-d\theta & \theta \in \left( 0, \frac{2-d}{16}m \right)  \\
\frac{8}{m}\left( \theta - \frac{m}{8} \right)^2 - m \left(\frac{-d^2 + 4d}{32}\right) & \theta \in \left[ \frac{2-d}{16}m, \frac{3}{16}m \right) \\
\frac{-5m}{16}\exp\left( \frac{5/16}{\theta/m - 1/2} + 1 \right) + m \left(\frac{11 + d^2 -4d}{32}  \right) & \theta \in \left( \frac{3}{16}m, \frac{1}{2}m \right) \\
m \left( \frac{11 + d^2 - 4d}{32}  \right) & \theta = \frac{1}{2}m \\
\frac{5m}{16} \exp\left(\frac{-5/16}{\theta/m - 1/2} + 1 \right) + m  \left(\frac{11 + d^2 -4d}{32}  \right) & \theta \in \left( \frac{1}{2}m, \frac{13}{16}m \right) \\
\frac{-8}{m} \left( \theta - \frac{7m}{8} \right)^2 + m\left( \frac{22 + d^2 -4d}{32}  \right) & \theta \in \left[ \frac{13}{16}m, \frac{\delta + 14}{16}m \right) \\
-\delta \theta + m \left(\frac{22 + d^2 + \delta^2 -4d + 28\delta}{32}\right) & \theta \in \left[ \frac{\delta + 14}{16}m, m  \right] .
\end{cases}
\end{equation}

This function was first constructed in \cite[eq. E.4]{patel2023gradient}; an example is plotted in \cref{plot-cd-building-function}. Importantly, this function was shown to satisfy \cref{as-bounded below,as-loc-lip-cont}, and have the following characteristics \cite[Proposition E.1]{patel2023gradient}.

\begin{figure}[!hb]
\centering
\begin{tikzpicture}[scale=.75]

    \draw[thick,->] (-.5,0) -- (8.5,0) node[anchor=north west] {$\theta$};
    \draw[thick,->] (0,-.5) -- (0,5.5) node[anchor=south east] {$f(\theta;m,1,1)$};


    \draw[thick] plot [smooth] coordinates{
        (0.0,-0.0)
(0.04020100502512563,-0.04020100502512563)
(0.20100502512562815,-0.20100502512562815)
(0.36180904522613067,-0.36180904522613067)
(0.5226130653266332,-0.5221017146031666)
(0.6834170854271356,-0.6497752582005505)
(0.8442211055276382,-0.7257329360369688)
(1.0050251256281406,-0.7499747481124214)
(1.1658291457286432,-0.7225006944269085)
(1.3266331658291457,-0.6433107749804298)
(1.4874371859296482,-0.5124049897729855)
(1.728643216080402,-0.2605926537073424)
(1.9698492462311559,0.016515615364500746)
(2.21105527638191,0.3199579518514699)
(2.3316582914572863,0.4813880057862896)
(2.4522613065326633,0.6487487816541662)
(2.5728643216080402,0.8211608671836848)
(2.7336683417085426,1.0562704215034684)
(2.8944723618090453,1.2918312770934373)
(3.0552763819095476,1.5180841975337684)
(3.2160804020100504,1.7199569204826775)
(3.3768844221105527,1.8770289162017866)
(3.5376884422110555,1.9695393003946937)
(3.698492462311558,1.9982970205337833)
(3.85929648241206,1.9999998694574241)
(4.0201005025125625,2.0)
(4.180904522613066,2.0000067690235555)
(4.341708542713568,2.0045170970424437)
(4.50251256281407,2.046948253736711)
(4.663316582914573,2.156821249250992)
(4.824120603015076,2.327179655585783)
(4.984924623115578,2.5368832743787517)
(5.1457286432160805,2.7666485380874297)
(5.306532663316583,3.0028331993572728)
(5.467336683417085,3.2367953122097046)
(5.628140703517588,3.463450355356745)
(5.788944723618091,3.6800420481485308)
(5.949748743718593,3.885276014608828)
(6.110552763819095,4.0787503551032165)
(6.2713567839195985,4.260592653707343)
(6.432160804020101,4.431232109727443)
(6.592964824120603,4.584322365596829)
(6.7537688442211055,4.6893702179237895)
(6.914572864321608,4.7427022044897855)
(7.075376884422111,4.744318325294816)
(7.236180904522613,4.69421858033888)
(7.396984924623116,4.592402969621979)
(7.557788944723618,4.442211055276382)
(7.71859296482412,4.28140703517588)
(7.8793969849246235,4.1206030150753765)
(8.0,4.0)
    };

\end{tikzpicture}
\caption{A plot of \cref{eqn-cd-building-function}.} \label{plot-cd-building-function}
\end{figure}

\begin{proposition} \label{result-cd-building-properties}
Let $m > 0$ and $d,\delta \in (0,1)$. The continuous extension of \cref{eqn-cd-building-function} to $[0,m]$ is continuous on its domain; bounded from below by $-m/8$; differentiable on $(0,m)$; admits one-sided derivatives of $-d$ at $\theta=0$ and $-\delta$ at $\theta = m$; has a locally Lipschitz continuous derivative function; and $f(m;m,d,\delta) \geq 7m/16$.
\end{proposition}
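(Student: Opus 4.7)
The plan is to verify each property of $f(\cdot;m,d,\delta)$ piece by piece. The function is defined on seven subintervals of $[0,m]$ separated by the breakpoints $\theta_1 = (2-d)m/16$, $\theta_2 = 3m/16$, $\theta_3 = m/2$, $\theta_4 = 13m/16$, and $\theta_5 = (\delta+14)m/16$. On each open subinterval the formula is elementary (linear, quadratic, or an exponential composed with a shifted rational function), so $f$ is $C^\infty$ there and $f''$ is bounded on any compact subinterval. Consequently, the proposition reduces to four routine steps: (i) verify continuity and derivative matching at each breakpoint $\theta_i$; (ii) read off one-sided derivatives at $\theta = 0$ and $\theta = m$; (iii) lower-bound $f$ piece by piece; and (iv) evaluate the last piece at $\theta = m$.

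For step (i), continuity at $\theta_1, \theta_2, \theta_4, \theta_5$ is direct substitution: the additive constants in each piece have been chosen precisely to absorb the value of the adjacent piece at the join. At $\theta_3 = m/2$, the substitution $u = \theta/m - 1/2$ makes the exponent $\pm(5/16)/u + 1$ tend to $-\infty$ as $u \to 0$ from either side, so both the third and fifth pieces extend by zero onto the middle constant $m(11 + d^2 - 4d)/32$. For derivative matching, the outer parabolas yield slopes $-d$ at $\theta_1$ and $-\delta$ at $\theta_5$, which agree with the flanking linear pieces; the inner parabolas yield slope $+1$ at $\theta_2$ and $\theta_4$; and a short computation in $u$ shows that the exponential bumps also have slope $+1$ at these joins, since at $|u| = 5/16$ the exponent equals $0$ and the prefactor $(5/16)^2/u^2$ arising from differentiation equals $1$. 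At $\theta_3$, the exponential decay dominates the $1/u^2$ blow-up in the derivative formula, so both one-sided derivatives vanish and match the derivative of the flat middle piece.

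Step (ii) is immediate from the outermost pieces: $f(\theta) = -d\theta$ near $\theta = 0$, and $f(\theta) = -\delta\theta + \text{const}$ near $\theta = m$. For step (iii) I would treat each piece in turn: the first linear piece bottoms out at $-(2-d)dm/16 > -m/8$; the leftmost parabola attains its minimum at $\theta = m/8$ with value $-m(4d - d^2)/32 \geq -3m/32$; the middle constant equals $m(11 + d^2 - 4d)/32 \geq m/4$; and on the exponential bumps, right parabola, and final linear piece, the additive constants $m(11 + d^2 - 4d)/32$ and $m(22 + d^2 - 4d)/32$ dominate the bounded corrections and keep $f$ comfortably above $-m/8$. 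Step (iv) is direct substitution: plugging $\theta = m$ into the seventh piece gives $m(22 + d^2 + \delta^2 - 4d - 4\delta)/32 \geq m/2 \geq 7m/16$, using $d^2 - 4d \geq -3$ and $\delta^2 - 4\delta \geq -3$ on $(0,1]$. Local Lipschitz continuity of $f'$ then follows from step (i) together with the boundedness of $f''$ on compact subsets: $f'$ is continuous on $(0,m)$, and on any compact $K \subset (0,m)$ a finite cover by closed subintervals of the open pieces yields a uniform bound on $f''$ and hence a Lipschitz constant for $f'$ on $K$.

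The main obstacle is the analysis at $\theta_3 = m/2$. One must carefully verify, from both sides, that the exponential decay overwhelms both the $1/u$ divergence in the exponent and the $1/u^2$ blow-up arising upon differentiation, so that $f$ and $f'$ each extend continuously by zero and join smoothly to the flat middle piece. The remaining verifications at $\theta_1, \theta_2, \theta_4, \theta_5$ are laborious but routine, amounting to confirming that the hand-tuned additive constants, parabolic coefficients, and exponential amplitudes were chosen precisely to produce matching function values and slopes.
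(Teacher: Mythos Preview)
The paper itself does not supply a proof of this proposition; it simply cites \cite[Proposition~E.1]{patel2023gradient}, where the construction originates. Your piece-by-piece verification is therefore the natural and expected approach, and the computations you carry out---matching of values and slopes at $\theta_1,\theta_2,\theta_4,\theta_5$, the exponential decay at $\theta_3$, and the evaluation $f(m;m,d,\delta)=m(22+d^2+\delta^2-4d-4\delta)/32\ge m/2$---are all correct.

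Two small points worth tightening. First, for local Lipschitz continuity of $f'$ near $\theta_3=m/2$ you need not only that $f'\to 0$ but that $f''$ stays bounded there; differentiating once more produces factors up to $1/u^4$, and you should state explicitly that the same $e^{-c/|u|}$ dominance over any power of $1/|u|$ disposes of this. Second, your ``finite cover by closed subintervals of the open pieces'' does not literally cover a compact $K$ containing a breakpoint; the cleaner phrasing is that $f'$ is continuous on $(0,m)$ and has one-sidedly bounded second derivative at each breakpoint, so $f'$ is Lipschitz across each join with rank equal to the maximum of the adjacent one-sided bounds on $|f''|$. Neither point is a genuine gap---your plan is sound.
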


We will use \cref{eqn-cd-building-function} as a building block to define the following function used in many of our examples. Let $\lbrace S_{j} : j + 1 \in \mathbb{N} \rbrace$ be a strictly increasing sequence in $\mathbb{R}$ and let $\lbrace d_j \in (0,1]: j+1 \in \mathbb{N} \rbrace$. Let $F: \mathbb{R} \to \mathbb{R}$ be defined by
\begin{equation} \label{eqn-cd-objective}
F(\theta) = \begin{cases}
- d_0(\theta - S_0) & \theta \leq S_0 \\
f(\theta - S_0; S_1 - S_0, d_0, d_1) & \theta \in (S_0,S_1] \\
f(\theta - S_j; S_{j+1} - S_j, d_j, d_{j+1}) + F(S_j) & \theta \in (S_j, S_{j+1}], ~\forall j \in \mathbb{N}.
\end{cases}
\end{equation}
Note, \cref{eqn-cd-objective} is defined recursively as $F(S_j)$ is needed in order for $F$ to be defined on $(S_j, S_{j+1}]$. We now show that this objective satisfies \cref{as-bounded below,as-loc-lip-cont}, along with some other properties.

\begin{proposition} \label{result-cd-objective}
Let $\lbrace S_{j} : j + 1 \in \mathbb{N} \rbrace$ be a strictly increasing sequence in $\mathbb{R}$ and let $\lbrace d_j \in (0,1]: j + 1 \in \mathbb{N} \rbrace$. Let $F: \mathbb{R} \to \mathbb{R}$ be defined as in \cref{eqn-cd-objective}. Then, $F$ satisfies \cref{as-loc-lip-cont}. Moreover, $F(\theta) \geq 7(S_j - S_0)/16 - (S_{j+1} - S_j)/8$ for $\theta \in (S_j, S_{j+1}]$, $F(S_j) \geq 7 (S_j - S_0)/ 16$ and $\dot F(S_j) = -d_j$ for all $j +1 \in \mathbb{N}$.
\end{proposition}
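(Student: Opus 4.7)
The proof is essentially a careful piecewise check that uses \Cref{result-cd-building-properties} as a black box, combined with induction for the lower bound on $F$. The plan has four threads: continuity of $F$, matching of one-sided derivatives at the knots $S_j$, local Lipschitz continuity of $\dot F$, and the lower bounds.

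First I would verify continuity of $F$ at every point. On each open piece $(S_j,S_{j+1})$, $F$ is continuous because the building block $f(\,\cdot\,;S_{j+1}-S_j,d_j,d_{j+1})$ is continuous on its domain by \Cref{result-cd-building-properties}. At each interior knot $S_j$ with $j\ge 1$, the right limit equals $f(0;S_{j+1}-S_j,d_j,d_{j+1})+F(S_j)=F(S_j)$, matching the left value. At $S_0$, the right limit of the second piece equals $0$, matching $F(S_0)=0$ from the linear piece. So $F$ is continuous on all of $\mathbb{R}$.

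Next I would verify that $\dot F(S_j)=-d_j$ and that $\dot F$ is locally Lipschitz. On each open piece, \Cref{result-cd-building-properties} already gives a locally Lipschitz derivative. At each $S_j$ with $j\ge 1$, the left one-sided derivative is the right-endpoint derivative of $f(\,\cdot\,;S_j-S_{j-1},d_{j-1},d_j)$, which equals $-d_j$; the right one-sided derivative is the left-endpoint derivative of $f(\,\cdot\,;S_{j+1}-S_j,d_j,d_{j+1})$, which also equals $-d_j$. At $S_0$, the slope of the linear piece and the right-endpoint derivative of the adjacent building block both equal $-d_0$. Therefore $\dot F$ is continuous at every knot and $\dot F(S_j)=-d_j$. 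For local Lipschitz continuity on a compact set $C\subset\mathbb{R}$, note $C$ meets only finitely many pieces; $\dot F$ is Lipschitz on the closure of each such piece by \Cref{result-cd-building-properties} (and trivially on the linear ray), and because the one-sided derivatives agree at each knot a standard triangle-inequality gluing argument across the finitely many pieces yields a single Lipschitz constant for $\dot F$ on $C$.

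Finally I would prove the lower bounds on $F$ by induction on $j$. The base case $F(S_0)=0=7(S_0-S_0)/16$ is immediate. For the inductive step,
\begin{equation*}
F(S_{j+1}) = f(S_{j+1}-S_j;S_{j+1}-S_j,d_j,d_{j+1}) + F(S_j) \;\ge\; \tfrac{7}{16}(S_{j+1}-S_j) + \tfrac{7}{16}(S_j-S_0) \;=\; \tfrac{7}{16}(S_{j+1}-S_0),
\end{equation*}
using $f(m;m,d,\delta)\ge 7m/16$ from \Cref{result-cd-building-properties}. For $\theta\in(S_j,S_{j+1}]$, combining the inductive bound $F(S_j)\ge 7(S_j-S_0)/16$ with the building block's lower bound $f\ge -(S_{j+1}-S_j)/8$ yields the stated inequality.

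The main obstacle is the gluing of Lipschitz constants at the knots. Although each piece is well-behaved, a naive piecewise estimate only controls differences $|\dot F(x)-\dot F(y)|$ when $x$ and $y$ lie in the same piece; controlling the case where $x,y$ straddle one or more knots requires the continuity $\dot F(S_j^{-})=\dot F(S_j^{+})=-d_j$ so one can insert the knot as an intermediate point and apply the triangle inequality, after which the finiteness of pieces intersecting any compact set closes the argument.
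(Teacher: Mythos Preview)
Your proposal is correct and follows essentially the same approach as the paper: verify continuity and matching one-sided derivatives at each knot $S_j$ using \Cref{result-cd-building-properties}, and establish the lower bounds by induction via the building block's bounds $f\ge -m/8$ and $f(m;m,d,\delta)\ge 7m/16$. One minor simplification in the paper's proof that you might adopt: because the building block is \emph{linear} on a neighborhood of each endpoint (see the first and last cases of \cref{eqn-cd-building-function}), $\dot F$ is actually constant on a neighborhood of each $S_j$, so local Lipschitz continuity there is immediate and no triangle-inequality gluing across knots is needed.
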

\begin{proof}
By the properties of $-d_0\theta$ and $f(\theta; m, d, \delta)$ (see \cref{result-cd-building-properties}), we need only check the behavior of $F$ at the points $\lbrace S_j : j+1 \in \mathbb{N} \rbrace$. At $S_0$, $F(S_0) = 0$ and $\lim_{\theta \downarrow S_0} F(\theta) = \lim_{\theta \downarrow 0} f(\theta - S_0; S_1 - S_0, 1, d_1) = 0$. Moreover, the derivative from the left at $\theta = S_0$ is $-d_0$ (i.e, the derivative of $-d_0\theta$), and the derivative from the right is $-d_0$ by \cref{result-cd-building-properties}. In other words, $\dot F(S_0) = -d_0$. Moreover, the derivative is constant in a neighborhood of $\theta = S_0$, which implies that the derivative of $F$ is locally Lipschitz continuous at $\theta = S_0$. Finally, by \cref{result-cd-building-properties}, $F(\theta) \geq -(S_1 - S_0)/8$ on $\theta \in (S_0, S_1]$ and $F(S_1) \geq 7 (S_1 - S_0)/ 16$. 

Suppose for some $j \in \mathbb{N}$, we have verified, for $\ell=0,\ldots,j-1$, $F$ is continuously differentiable at $\theta = S_\ell$; the derivative is locally Lipschitz continuous at this point; $\dot F(S_\ell) = -d_\ell$; $F(\theta) \geq 7(S_{\ell+1} - S_0)/16 - (S_{\ell+1} - S_{\ell})/8$ for all $\theta \in (S_\ell,S_{\ell+1}]$; and $F(S_{\ell+1}) \geq 7 (S_{\ell+1} - S_0)/ 16$. 

Now, at $S_j$, $\lim_{\theta \downarrow S_j} F(\theta) = \lim_{\theta\downarrow S_j} f(\theta - S_j; S_{j+1} - S_j, d_{j}, d_{j+1}) + F(S_j) = F(S_j)$. So $F$ is continuous at $\theta = S_j$. By \cref{result-cd-building-properties}, the derivatives from the left and right at $\theta = S_j$ are both $-d_j$. As the derivative is constant in a small neighborhood of $\theta = S_j$, $\dot F$ is locally Lipschitz continuous at $\theta = S_j$. Finally, by \cref{result-cd-building-properties}, $F(\theta) \geq F(S_j) - (S_{j+1} - S_j)/8 \geq 7(S_j - S_0)/16 - (S_{j+1} - S_j)/8$ on $(S_j, S_{j+1}]$ and $F(S_{j+1}) \geq F(S_j) + 7(S_{j+1} - S_j)/16 \geq 7(S_{j+1} - S_0)/16$. This completes the proof by induction.
\end{proof}

With the necessary background for our constructed function, we now provide examples for all the algorithm listed in \cref{table-objective-free} in order, starting with gradient descent with constant step size.

\subsection{Constant Step Size} \label{subsec:ce-constant}
Consider gradient descent with constant step size: given $\theta_0 \in \mathbb{R}^n$, $\lbrace \theta_k : k \in \mathbb{N} \rbrace$ are recursively generated by
\begin{equation}
\theta_{k} = \theta_{k-1} - m \dot F(\theta_{k-1}), ~\forall k \in \mathbb{N}, 
\end{equation}
where $F:\mathbb{R}^n \to \mathbb{R}$. Gradient descent with constant step size is known to generate iterates on a convex, one-dimensional quadratic function that diverge (which produces an objective function sequence that diverges), if the step size exceeds a multiple of the reciprocal of the Lipschitz rank of the gradient function. Here, we show a slightly stronger statement.

\begin{proposition}
Let $F(\theta) = \theta^4/4$, which satisfies \cref{as-bounded below,as-loc-lip-cont}. Then, for gradient descent with any constant step size $m > 0$, $\exists \theta_0 \in \mathbb{R}$ such that the iterates of gradient descent diverge, the objective function at the iterates diverges, and the absolute value of the gradient function at the iterates diverges.
\end{proposition}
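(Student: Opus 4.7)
The plan is to exhibit an explicit initial point $\theta_0$ and show by induction that the iterates blow up geometrically. First, I would verify the stated properties of $F$: $F(\theta)=\theta^4/4 \geq 0$ (so \cref{as-bounded below} holds with $F_{l.b.} = 0$), and $\dot F(\theta) = \theta^3$ has derivative $3\theta^2$, which is continuous and hence bounded on any compact set, giving \cref{as-loc-lip-cont}. The gradient descent recursion then reads
\begin{equation}
    \theta_k = \theta_{k-1} - m\theta_{k-1}^3 = \theta_{k-1}(1 - m\theta_{k-1}^2).
\end{equation}

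Next, I would pick any $\theta_0 \in \mathbb{R}$ with $\theta_0^2 > 2/m$, so that $r_0 \defeq m\theta_0^2 - 1 > 1$. Taking absolute values in the recursion gives $|\theta_k| = |\theta_{k-1}|\,|1 - m\theta_{k-1}^2|$, and the condition $\theta_0^2 > 2/m$ forces $|1 - m\theta_0^2| = m\theta_0^2 - 1 > 1$, so $|\theta_1| > |\theta_0|$. The key observation is that growth is self-reinforcing: if $|\theta_k| \geq |\theta_{k-1}|$ then $m\theta_k^2 - 1 \geq m\theta_{k-1}^2 - 1 \geq r_0 > 1$, so the next growth factor is at least $r_0$. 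By induction, $|\theta_k| \geq r_0 |\theta_{k-1}| \geq r_0^k |\theta_0|$, whence $|\theta_k| \to \infty$.

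From $|\theta_k| \to \infty$ the remaining conclusions are immediate: $|\dot F(\theta_k)| = |\theta_k|^3 \to \infty$ and $F(\theta_k) = \theta_k^4/4 \to \infty$. I would state the proof in exactly this order: verify the properties, write the recursion, choose $\theta_0$, run the one-line induction, and read off the three divergence statements.

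The only subtlety — and the closest thing to an obstacle — is ruling out the cancellation cases $\theta_k = 0$ or $1 - m\theta_{k-1}^2 = 0$ that would stall the iteration. Both are avoided automatically by the choice $\theta_0^2 > 2/m$, because the induction shows $\theta_k^2 \geq \theta_0^2 > 2/m > 1/m$, which keeps the iterates away from $0$ and keeps $1 - m\theta_k^2 < -1$ for every $k$. No further case analysis is required, so the argument is short and fully explicit.
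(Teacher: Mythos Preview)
Your proposal is correct and follows essentially the same approach as the paper: pick $\theta_0^2 > 2/m$ and induct on the recursion $\theta_k = \theta_{k-1}(1 - m\theta_{k-1}^2)$. The only difference is cosmetic---the paper splits into the cases $\theta_k > 0$ and $\theta_k < 0$ to show $\{|\theta_k|\}$ is strictly increasing, whereas you work directly with $|\theta_k|$ and extract the explicit geometric rate $|\theta_k| \geq r_0^k |\theta_0|$, which is a slightly cleaner way to conclude divergence.
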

\begin{proof}
Let $m > 0$. Choose $\theta_0^2 > 2/m$. We begin by induction to show $\theta_{k}^2 > \theta_{k-1}^2$ for all $k \in \mathbb{N}$. As the base case and generalization follow the same strategy, we state the induction hypothesis and generalization step. The induction hypothesis is $\theta_{k}^2 > \theta_{k-1}^2 > 2/m$, for some $k \in \mathbb{N}$. We now show $\theta_{k+1}^2 > \theta_k^2$. Note, $\theta_{k+1} = \theta_k - m \theta_k^3$. If $\theta_{k} > 0$, then $\theta_{k+1} < \theta_{k}$, and
\begin{equation}
\theta_k^2 > 2/m \Leftrightarrow m\theta_k^3 > 2 \theta_k \Leftrightarrow -(\theta_k - m\theta_k^3) > \theta_k.
\end{equation}
Hence, if $\theta_k > 0$, then $\theta_{k+1}^2 > \theta_{k}^2 > 2/m$. If $\theta_k < 0$, then $\theta_{k} < \theta_{k+1}$, and
\begin{equation}
\theta_{k}^2 > 2/m \Leftrightarrow m \theta_k^3 < 2 \theta_k \Leftrightarrow - \theta_k < \theta_k - m \theta_k^3.
\end{equation}
Hence, if $\theta_k < 0$, then $\theta_{k+1}^2 > \theta_k^2 > 2/m$. 

With this established, we see that $\lbrace | \theta_k | \rbrace$ is a strictly increasing sequence. Therefore, $\lbrace F(\theta_k) = \theta_k^4 / 4 \rbrace$ is a strictly increasing sequence. Moreover,  $\lbrace |\dot F(\theta_k) | = |\theta_k|^3 \rbrace$ is a strictly increasing sequence.
\end{proof}

\subsection{Barzilai-Borwein Method} \label{subsec:ce-bbb}
We construct a one-dimensional objective function on which the iterates generated by the Barzilai-Borwein method will diverge and cause the optimality gap to diverge.\footnote{The authors did not know of a different example in \cite{malitsky2023adaptive} when creating the manuscript.} In this context, the Barzilai-Borwein method begins with an iterate $\theta_0$ and an initial step $m_0 > 0$ and generates iterates $\lbrace \theta_{k} : k \in \mathbb{N} \rbrace$ by the recursion $\theta_{k+1} = \theta_k - m_k \dot F(\theta_k)$ for $k + 1 \in \mathbb{N}$, where $F: \mathbb{R} \to \mathbb{R}$; and, for $k \in \mathbb{N}$, 
\begin{equation}
m_k = \frac{\theta_{k} - \theta_{k-1}}{\dot F(\theta_k) - \dot F(\theta_{k-1})}.
\end{equation}
Note, both variations of the Barzilai-Borwein method reduce to the same case in one dimension. We now construct our objective function, and show the Barzilai-Borwein method will produce iterates whose optimality gap diverges. 

\begin{proposition}
Let $m_0 > 0$, $S_j = m_0 j$ for all $j+1 \in \mathbb{N}$, and let $d_j = 2^{-j}$ for all $j + 1 \in \mathbb{N}$. Let $F: \mathbb{R} \to \mathbb{R}$ be defined as in \cref{eqn-cd-objective}. Let $\theta_0 = 0$, $\theta_1 = \theta_0 - m_0 \dot F(\theta_0)$, and
\begin{equation}
\theta_{k+1} = \theta_k - \frac{\theta_k - \theta_{k-1}}{\dot F(\theta_k) - \dot F(\theta_{k-1})} \dot F(\theta_k), ~k \in \mathbb{N}.
\end{equation}
Then $\lim_k F(\theta_k) = \infty$.
\end{proposition}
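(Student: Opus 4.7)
The plan is to prove by induction that the iterates land exactly on the breakpoints, $\theta_k = S_k = k m_0$ for every $k\geq 0$, and then invoke \cref{result-cd-objective} to conclude divergence of the objective.

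For the base cases, $\theta_0 = 0 = S_0$ is given, and by \cref{result-cd-objective} we have $\dot F(S_0) = -d_0 = -1$, hence
\begin{equation*}
    \theta_1 = \theta_0 - m_0 \dot F(\theta_0) = 0 - m_0(-1) = m_0 = S_1.
\end{equation*}
For the induction step, assume $\theta_{k-1} = S_{k-1}$ and $\theta_k = S_k$ for some $k \geq 1$. By \cref{result-cd-objective}, $\dot F(S_j) = -d_j = -2^{-j}$, so
\begin{equation*}
    m_k \;=\; \frac{S_k - S_{k-1}}{\dot F(S_k) - \dot F(S_{k-1})} \;=\; \frac{m_0}{-2^{-k} + 2^{-(k-1)}} \;=\; \frac{m_0}{2^{-k}} \;=\; m_0\,2^k,
\end{equation*}
and consequently
\begin{equation*}
    \theta_{k+1} \;=\; S_k - m_0 2^k \cdot (-2^{-k}) \;=\; S_k + m_0 \;=\; S_{k+1}.
\end{equation*}
This closes the induction. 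Finally, \cref{result-cd-objective} gives $F(\theta_k) = F(S_k) \geq 7(S_k - S_0)/16 = 7 k m_0/16$, which tends to infinity.

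The main point is not an obstacle so much as a verification: the sequences $\{S_j\}$ and $\{d_j\}$ have been selected so that the BB step size exactly doubles at each iteration and transports $\theta_k$ precisely to the next breakpoint where the gradient is again known in closed form. The only thing to be careful about is invoking \cref{result-cd-objective} for both the derivative values at the breakpoints and for the lower bound on $F(S_j)$; no estimates on the interior behavior of $F$ on each interval $(S_j, S_{j+1}]$ are ever needed. The construction of $F$ in \cref{eqn-cd-objective}, rather than the algebra of the induction, is where the real work lies.
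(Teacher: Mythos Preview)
Your proposal is correct and follows essentially the same approach as the paper: both argue by induction that $\theta_k = S_k$ using the derivative values $\dot F(S_j) = -2^{-j}$ supplied by \cref{result-cd-objective}, and then invoke the lower bound $F(S_k) \geq 7(S_k - S_0)/16$ from the same proposition to conclude divergence. The only cosmetic difference is that the paper separates out $\theta_2$ as an explicit base case before the general inductive step, whereas you fold it into the induction starting at $k=1$; the algebra and the cited facts are identical.
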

\begin{proof}
The initial step is a special case. We then proceed by induction on the second update. For the initial step,
$\theta_1 = \theta_0 - m_0 \dot F(\theta_0) = 0 + m_0 = m_0 = S_1$. 
The second step is the base case for our proof by induction. By \cref{result-cd-objective},
\begin{equation}
\theta_2 = \theta_1 - \frac{\theta_1 - \theta_{0}}{\dot F(\theta_1) - \dot F(\theta_{0})} \dot F(\theta_1) = m_0 - \frac{m_0}{-2^{-1} + 1}(- 2^{-1}) = 2m_0 = S_2.
\end{equation}
Suppose $\theta_\ell = m_0 \ell = S_{\ell}$ for all $\ell = 0,\ldots,j$. Then, by \cref{result-cd-objective},
\begin{equation}
\theta_{j+1} = \theta_j - \frac{\theta_j - \theta_{j-1}}{\dot F(\theta_j) - \dot F(\theta_{j-1}) } \dot F(\theta_j) = m_0 j - \frac{m_0}{-2^{-j} + 2^{-j+1}} (-2^{j}) = m_0 j + \frac{m_0}{2-1} = m_0 (j+1) = S_{j+1}.
\end{equation}
By induction, $\theta_j = m_0 j = S_j$ for all $j + 1\in \mathbb{N}$. By \cref{result-cd-objective}, $\lim_{j} F(\theta_j) = \lim_{j} F(S_j) \geq \lim_{j} 7S_j / 16 = \infty$.
\end{proof}

\subsection{Nesterov Accelerated Gradient Descent}\label{subsec:ce-nag}
Nesterov's accelerated gradient descent \cite{nesterov1983}, is known to achieve the optimal rate of convergence among all methods only using gradient information \cite[Chapter 2]{nesterov2004convexopt}. In this section, we focus on a slight variation of the original method discussed in \cite{li2023convex}, as it was shown under relaxed Lipschitz smoothness conditions that the modified algorithm retains the optimal rate of convergence.

\begin{algorithm}[H]
    \caption{Nesterov's Accelerated Gradient Method as specified in \cite[\S 4.2]{li2023convex} } 
    \label{alg:NAG}
    \begin{algorithmic}[1]
        \Require $\theta_0 \in \mathbb{R}^n, m \in (0, \infty)$
        \State $z_0 = \theta_0, B_0 = 0, A_0 = 1/m$
        \For{$t = 0,...$} 
            \State $B_{t+1} = B_t + .5(1+\sqrt{4B_t+1})$
            \State $A_{t+1} = B_{t+1} + \frac{1}{m}$
            \State $y_t = \theta_t + (1-\frac{A_t}{A_{t+1}})(z_t-\theta_t)$
            \State $\theta_{t+1} = y_t - m\dot{F}(y_t)$
            \State $z_{t+1} = z_t - m(A_{t+1}-A_t)\dot{F}(y_t)$
        \EndFor
    \end{algorithmic}
\end{algorithm}

Before constructing the objective function and showing the divergence behavior of \cref{alg:NAG}, we note some properties of $\lbrace B_t \rbrace$. First, $B_0 = 0$, $B_1 = 1$, and $B_{t+1} - B_t > 1$ for all $t \in \mathbb{N}$. As a result, $A_{t+1} - A_t = B_{t+1} - B_t > 1$ and $A_t/A_{t+1} \in (0,1)$ for all $t + 1 \in \mathbb{N}$. We now define three sequences that mimic the behavior of \cref{alg:NAG}. Let $m > 0$, $\Theta_0 = 0$, $Z_0 = 0$, $S_0 = 0$, $\Theta_t = Y_{t-1} + m$ for $t \in \mathbb{N}$, $Z_t = Z_{t-1} + m (A_{t} - A_{t-1})$, and
\begin{equation}
Y_t = \Theta_t + \left( 1 - \frac{A_{t-1}}{A_t} \right) (Z_t - \Theta_t), ~t\in \mathbb{N}.
\end{equation}
For these sequences, we have the following property.
\begin{lemma} \label{result-cd-nag-sequence}
For $t > 1$, $\Theta_t < Y_t < Z_t$. $\lbrace Y_t : t+1 \in \mathbb{N} \rbrace$ is a strictly increasing sequence. 
\end{lemma}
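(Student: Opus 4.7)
The plan is a clean two-part induction argument, resting on the observation that $Y_t$ is literally a convex combination of $\Theta_t$ and $Z_t$. Setting $\lambda_t = 1 - A_{t-1}/A_t$, the defining recursion rearranges to $Y_t = (1-\lambda_t)\Theta_t + \lambda_t Z_t$, and the preamble to the lemma already records that $\lambda_t \in (0,1)$. Therefore the two strict inequalities $\Theta_t < Y_t < Z_t$ are equivalent to the single inequality $\Theta_t < Z_t$, and the whole first claim reduces to proving $\Theta_t < Z_t$ for every $t > 1$.

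The first induction establishes $\Theta_t < Z_t$ for $t \geq 2$. For the base case $t=2$ I would compute $\Theta_1 = Y_0 + m = m$ and $Z_1 = Z_0 + m(A_1-A_0) = m$ directly, note that this forces $Y_1 = m$, and then use $\Theta_2 = Y_1 + m = 2m$ versus $Z_2 = m + m(A_2 - A_1)$, where $A_2 - A_1 = B_2 - B_1 > 1$, to conclude $Z_2 > 2m = \Theta_2$. For the inductive step, assuming $\Theta_t < Z_t$, the convex combination representation yields $Y_t < Z_t$, hence
\begin{equation}
\Theta_{t+1} = Y_t + m < Z_t + m < Z_t + m(A_{t+1} - A_t) = Z_{t+1},
\end{equation}
where the second inequality invokes $A_{t+1} - A_t > 1$ from the preamble. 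This closes the induction and, combined with the convex combination, gives $\Theta_t < Y_t < Z_t$ for all $t > 1$.

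For the second claim I would handle monotonicity separately for the transitions $0 \to 1$ and $t \to t+1$ with $t \geq 1$. The first is immediate: $Y_0 = 0$ and $Y_1 = m$. For $t \geq 1$, the convex combination bound from the first part gives $Y_{t+1} > \Theta_{t+1}$ (noting that for $t+1 = 2$ the strict inequality $\Theta_2 < Z_2$ must be used, and for larger indices the just-proved inductive conclusion), and then $\Theta_{t+1} = Y_t + m > Y_t$ closes the chain. Thus $\{Y_t : t+1 \in \mathbb{N}\}$ is strictly increasing.

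There is no real obstacle here; the only subtlety is the degenerate $t=1$ case where $\Theta_1 = Y_1 = Z_1 = m$, which explains why the lemma is stated only for $t > 1$ and why the base case of the induction must be taken at $t=2$ rather than $t=1$. Once that is noted, the rest is bookkeeping using $\lambda_t \in (0,1)$ and $A_{t+1} - A_t > 1$.
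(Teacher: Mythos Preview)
Your proof is correct and follows essentially the same approach as the paper's: the same base case at $t=2$, the same inductive step $Z_t > Y_{t-1}$ combined with $A_{t+1}-A_t > 1$, and the same use of the strict convex-combination representation of $Y_t$ to extract both inequalities. The only difference is cosmetic---you explicitly reduce the first claim to $\Theta_t < Z_t$ before invoking the convex combination, whereas the paper states both inequalities together---but the underlying argument is identical.
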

\begin{proof}
We begin with the first statement.
Note, $\Theta_0 = Z_0 = Y_0 = 0$ and $\Theta_1 = Z_1 = Y_1 = m$. We proceed by induction with the base case of $t = 2$. $\Theta_2 = Y_1 + m = 2m$; $Z_2 = m + m (A_2 - A_1) > \Theta_2$. As $S_2$ is a strict convex combination of $\Theta_2$ and $Z_2$, $\Theta_2 < Y_2 < Z_2$. Suppose, for all $\ell \in \mathbb{N}$ strictly less than $t \in \mathbb{N}_{\geq 3}$, $\Theta_\ell < Y_\ell < Z_\ell$. Then,
\begin{equation}
Z_t = Z_{t-1} + m(A_t - A_{t-1}) > Y_{t-1} + m = \Theta_t.
\end{equation}
Since $Y_t$ is a strict convex combination of $\Theta_t$ and $Z_t$, the statement holds.

Now, we verify that $\lbrace Y_t : t+1 \in \mathbb{N} \rbrace$ is strictly increasing. Note, $Y_0 = 0$, $Y_1 = 1$ and $Y_2 > \Theta_2 = Y_1 + m > Y_1$. Suppose this holds up to and including some index $t \in \mathbb{N}$. Then, $Y_{t+1} > \Theta_{t+1} = Y_{t} + m > Y_t$. The conclusion follows.
\end{proof}

We now construct the counterexample and show that \cref{alg:NAG} produces iterates that diverge and whose optimality gap diverges, and whose derivates at $\lbrace y_t \rbrace$ and at $\lbrace \theta_t \rbrace$ remain bounded away from zero. To do so, let $S_0 = \Theta_0 = 0$, $S_1 = Y_1 = m$,
\begin{equation} \label{eqn-cd-nag-sequence}
S_{2j} = \Theta_{j+1}, \quad\text{and}\quad S_{2j+1} = Y_{j+1}, ~\forall j \in \mathbb{N}.
\end{equation} 
By \cref{result-cd-nag-sequence}, then $\lbrace S_j : j+1 \in \mathbb{N} \rbrace$ is a strictly increasing sequence. We now have the following example.

\begin{proposition}
Let $m > 0$. Define $\lbrace S_j : j + 1 \in \mathbb{N} \rbrace$ as in \cref{eqn-cd-nag-sequence}, and let $d_j = 1$ for all $j + 1 \in \mathbb{N}$. Let $F: \mathbb{R} \to \mathbb{R}$ be defined as in \cref{eqn-cd-objective}. Let $\theta_0 = 0$, and let $\lbrace \theta_j : j \in \mathbb{N} \rbrace$ and $\lbrace y_j : j+1 \in \mathbb{N} \rbrace$ be defined as in \cref{alg:NAG}. Then, $\lim_k F(\theta_k) = \infty$, $\lim_k F(y_k) = \infty$, and $\inf_{k} | \dot F(\theta_k) | = \inf_k |\dot F(y_k) | = 1$.
\end{proposition}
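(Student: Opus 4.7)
The plan is to show that the Nesterov iterates $\lbrace \theta_t \rbrace$, $\lbrace z_t \rbrace$, and $\lbrace y_t \rbrace$ produced by \cref{alg:NAG} coincide exactly with the auxiliary sequences $\lbrace \Theta_t \rbrace$, $\lbrace Z_t \rbrace$, and $\lbrace Y_t \rbrace$ introduced just before \cref{result-cd-nag-sequence}. Those auxiliary sequences lie on the grid $\lbrace S_j \rbrace$ by the definition in \cref{eqn-cd-nag-sequence}, and at each grid point \cref{result-cd-objective} guarantees $\dot F(S_j) = -d_j = -1$. Once this identification is in hand, every gradient evaluation inside the algorithm contributes exactly $-1$, so the Nesterov updates collapse to the deterministic linear recursions used to define the auxiliary sequences, and the divergence and gradient-infimum claims both follow essentially by inspection.

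The core step is an induction on $t$ establishing $\theta_t = \Theta_t$, $z_t = Z_t$, and $y_t = Y_t$. For the base case, the initializations give $\theta_0 = z_0 = 0 = \Theta_0 = Z_0$, and the interpolation formula then yields $y_0 = 0 = Y_0$. For the inductive step, the hypothesis places $y_t$ in $\lbrace S_j \rbrace$ via \cref{eqn-cd-nag-sequence}, so \cref{result-cd-objective} gives $\dot F(y_t) = -1$; substituting this into the $\theta$ and $z$ updates of \cref{alg:NAG} produces $\theta_{t+1} = y_t + m$ and $z_{t+1} = z_t + m(A_{t+1} - A_t)$, which match the definitions of $\Theta_{t+1}$ and $Z_{t+1}$. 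The formula for $y_{t+1}$ in the algorithm is the same convex combination used to define $Y_{t+1}$, closing the induction.

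To finish, \cref{result-cd-nag-sequence} gives that $\lbrace Y_t \rbrace$ is strictly increasing, and combined with the relation $\Theta_{t+1} = Y_t + m$ one sees that both $Y_t$ and $\Theta_t$ diverge to infinity. The lower bound $F(S_j) \geq 7(S_j - S_0)/16$ from \cref{result-cd-objective} then forces $F(\theta_t) \to \infty$ and $F(y_t) \to \infty$. Because every $\theta_t$ and $y_t$ lies in $\lbrace S_j \rbrace$ and $\dot F(S_j) = -1$, we have $|\dot F(\theta_k)| = |\dot F(y_k)| = 1$ for every $k$, so the two infima equal $1$. The principal subtlety I anticipate is the bookkeeping in \cref{eqn-cd-nag-sequence}: one must carefully match the parity convention there so that at every step of the induction the point $y_t$ at which the algorithm samples the gradient truly lies on $\lbrace S_j \rbrace$, rather than inside a transition region of the building block $f$, where $\dot F$ would no longer equal $-1$.
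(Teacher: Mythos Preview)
Your proposal is correct and follows essentially the same route as the paper: both argue by induction that $(\theta_t,z_t,y_t)=(\Theta_t,Z_t,Y_t)$, using \cref{result-cd-objective} to evaluate $\dot F(y_t)=-1$ at each step, and then invoke the growth bound $F(S_j)\geq 7S_j/16$ together with \cref{result-cd-nag-sequence} to conclude divergence of the objective values and the gradient-infimum claim. Your remark about the bookkeeping in \cref{eqn-cd-nag-sequence} is apt; in carrying out the induction you should record explicitly that $Y_t=S_{2t-1}$ for $t\geq 1$ (and $Y_0=S_0$), and that $\Theta_t\in\{S_j\}$ as well, so that every gradient evaluation and every value of $F(\theta_t)$ indeed lands on the grid.
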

\begin{proof}
We need to verify, $\theta_{j} = S_{2j}$ for all $j+1 \in \mathbb{N}$ and $\theta_{j+1} = S_{2j+1}$ for all $j+1 \in \mathbb{N}$. Once this is verified, then, by \cref{result-cd-objective}, $F(S_j) \geq 7S_j/16$ and $\dot F(S_j) = -1$ for all $j+1 \in \mathbb{N}$. The result will follow.

To achieve this, we need to show $\Theta_j = \theta_j$, $Y_j = y_j$ and $Z_j = z_j$ for all $j+1 \in \mathbb{N}$. We proceed by induction. When $\theta_0 = 0$, then $z_0 = 0$ and $y_0 = 0$. Thus, $\Theta_0 = \theta_0$, $Y_0 = y_0$ and $Z_0 = z_0$. Suppose for $j \in \mathbb{N}$, $\Theta_j = \theta_j$, $Y_j = y_j$ and $Z_j = z_j$. Then, $\theta_{j+1} =  y_j - m \dot F(y_j) = Y_j - m \dot F(Y_j) = Y_j - m \dot F(S_{2j-1}) = Y_j + m = \Theta_{j+1}$, since $\dot F(S_{2j-1}) = -1$ by \cref{result-cd-objective} and our choice of $d_{2j-1} = 1$. Moreover, $z_{j+1} = z_j - m (A_{j+1} - A_j) \dot F(Y_j) = Z_j + m(A_{j+1} - A_j) = Z_{j+1}$. Finally, $y_{j+1} = \theta_{j+1} + (1 - A_{j+1}/A_{j+2})(z_{j+1} - \theta_{j+1}) = \Theta_{j+1} + (1 - A_{j+1}/A_{j+2}) (Z_{j+1} - \Theta_{j+1}) = Y_{j+1}$. We have concluded the proof by induction. The result follows.
\end{proof}

\subsection{Bregman Distance Method}\label{subsec:ce-bregman}
Roughly, the Bregman Distance Method optimizes a composite nonconvex objective function, where one component of the objective is differentiable (to be denoted by $F_1(\theta)$), and the other is any proper, lower semicontinuous function (to be denoted by $F_2(\theta)$) \cite[see][for a discussion]{bolte2018first}. The insight for this method is to adapt to the ``geometry'' of the problem, and introduce a suitable Bregman distance function to create upper bound models on the objective. The parameter that regulates the step size is then the weight placed on the Bregman distance penalty, which we denote by $m$.

For our example, let $m > 0$. Define $S_j \defeq j m$ for all $j+1 \in \mathbb{N}$, and $d_{j} = 1$ for all $j+1 \in \mathbb{N}$. Let $F_1(\theta) \defeq 0$, and let $F_2: \mathbb{R} \to \mathbb{R}$ be defined as in \cref{eqn-cd-objective}. Then, our composite objective function is $F(\theta) \defeq F_1(\theta) + F_2(\theta) = F_2(\theta)$. We can use $.5\inlinenorm{\cdot}_2^2$ to define the Bregman distance (as it is convex and $F_1(\theta) = 0$), which reduces the Bregman distance method to the standard proximal procedure as specified in \cref{alg:bregman-dist}. Note, that the first order Taylor approximation of $F_1(\theta)$ is $0$, therefore \cref{alg:bregman-dist} is indeed the Bregman distance method.

\begin{algorithm}
    \caption{Bregman Distance Method \cite[NoLips in][\S 3]{bolte2018first}}
    \label{alg:bregman-dist}
    \begin{algorithmic}
        \Require $\theta_0 \in \mathbb{R}$, $m > 0$
        \For{$k = 1,...$}
            \State $\theta_k = \arg\min_{\theta \in \mathbb{R}} \left\{ F(\theta) + \frac{1}{2m}\norm{ \theta - \theta_{k-1} }_2^2 \right\}$
        \EndFor
    \end{algorithmic}
\end{algorithm}

In \cref{alg:bregman-dist}, the inner loop part is as difficult as the original problem. Hence, we assume that we can solve it locally to our preference. In this case, we have the following result.

\begin{proposition}
Let $m > 0$. Let $S_j = m_j$ and $d_j = 1$ for all $j+1 \in \mathbb{N}$. Let $F: \mathbb{R} \to \mathbb{R}$ be defined as in \cref{eqn-cd-objective}. If $\theta_0 = 0$ and the inner loop can be solved to a local minimizer, then there is a choice of local minimizers such that $\lim_k F(\theta_k) = \infty$ and $\inf_k |\dot F(\theta_k)| = 1$.
\end{proposition}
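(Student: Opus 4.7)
The plan is to show by induction on $k$ that the choice $\theta_k = S_k = km$ is a valid output of the inner subproblem at each step (i.e., a local minimizer), and then invoke \cref{result-cd-objective} for the divergence conclusions.

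The first step I would take is to exploit the fact that $d_j = 1$ for every $j$ in order to pin down the local shape of $F$ around each $S_j$. Inspecting the first and last pieces of \cref{eqn-cd-building-function} with $d = \delta = 1$, the building block $f(\cdot; m, 1, 1)$ is linear with slope $-1$ on both $(0, m/16)$ and $[15m/16, m]$. Stitching consecutive copies together via \cref{eqn-cd-objective} and matching slopes across the join at $S_j$, $F$ is $C^1$ and linear with slope $-1$ on the two-sided neighborhood $(S_j - m/16, S_j + m/16)$ of each $S_j$.

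Second, with that local linearity in hand, the inductive step is a routine local computation. The base case $\theta_0 = 0 = S_0$ is immediate. Assuming $\theta_{k-1} = S_{k-1}$, define the Bregman objective
\begin{equation*}
g(\theta) \defeq F(\theta) + \frac{1}{2m}(\theta - S_{k-1})^2.
\end{equation*}
On the linear neighborhood of $S_k$ one has $\dot F \equiv -1$, so $g'(S_k) = -1 + \frac{1}{m}(S_k - S_{k-1}) = 0$ and $g''(\theta) = 1/m > 0$. Hence $S_k$ is a strict local minimizer of $g$. Since the proposition permits the inner loop to return any local minimizer, we may pick $\theta_k = S_k$.

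By induction, $\theta_k = S_k$ for every $k$. \Cref{result-cd-objective} then yields $F(\theta_k) = F(S_k) \geq 7 S_k/16 = 7km/16 \to \infty$ and $\dot F(\theta_k) = -d_k = -1$, giving $\inf_k |\dot F(\theta_k)| = 1$. I expect the only subtlety to be the bookkeeping in the first step: one must verify that the slopes on either side of each $S_j$ agree (both equal to $-1$, forced by the common choice $d_j = 1$) so that $F$ is genuinely $C^1$ and linear on a full two-sided neighborhood of $S_j$; once that is in place, the inductive verification reduces to the strong convexity of the quadratic penalty on this neighborhood and everything else is mechanical.
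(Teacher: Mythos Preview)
Your proposal is correct and follows essentially the same approach as the paper: an induction showing $\theta_k = S_k$ by verifying the second-order sufficient conditions for the proximal subproblem at $S_k$ (first-order stationarity from $\dot F(S_k)=-1$ and $S_k - S_{k-1}=m$, positive second derivative $1/m$ from local linearity of $F$), followed by an appeal to \cref{result-cd-objective}. Your additional bookkeeping about the two-sided linear neighborhood of each $S_j$ is a welcome elaboration of a fact the paper simply asserts.
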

\begin{proof}
We proceed by induction to show $S_j = \theta_j$. First, $S_0 = \theta_0 = 0$. Suppose the relationship holds up to some $j \in \mathbb{N}$; that is, $\theta_j = S_j$. Then, we locally minimize $F(\theta) + \frac{1}{2m}(\theta - \theta_j)$.
We now verify that $S_{j+1}$ is a local minimizer and let $\theta_{j+1} = S_{j+1}$, which will complete the proof by induction. We check second-order optimality conditions. $\dot F(S_{j+1}) = \dot F(S_{j+1}) + m^{-1}(S_{j+1} - S_j) = -1 + m^{-1}m = 0$. Since $\dot F(\theta)$ is locally linear at $S_{j+1}$, the second derivative of the local problem is $1/m > 0$. Hence, we can set $\theta_{j+1} = S_{j+1}$. The result follows by \cref{result-cd-objective}.
\end{proof}

\subsection{Negative Curvature Method}\label{subsec:ce-negative}
Negative curvature methods try to exploit negative curvature detected through the Hessian to escape undesirable stationary points, like local maxima and saddlepoints when minimizing an objective. An example procedure explored in \cite{curtis2018exploiting} is shown in \cref{alg:neg-curv-two-step}.
\begin{algorithm}[H]
    \caption{Specific Two-step method presented in \cite[\S 2.1]{curtis2018exploiting}}
    \label{alg:neg-curv-two-step}
    \begin{algorithmic}
        \Require $\theta_0, m > 0, m' > 0$
        \For{$k = 0,...$}
            \If{$\ddot{F}(\theta_k) \succcurlyeq 0$}
                \State $s_k' = 0$
            \Else
                \State $s_k' = $ \Call{SelectDirection()}{}
            \EndIf
            \If{$\dot{F}(\theta_k) = 0$}
                \State $s_k = 0$
            \Else
                \State $s_k = -\dot{F}(\theta_k)$
            \EndIf
            \If{$s_k = s_k' = 0$} 
                \State \Return{$\theta_k$}
            \EndIf
            \State $\theta_{k+1} = \theta_k + m s_k + m' s_k'$
        \EndFor
    \end{algorithmic}
\end{algorithm}

The method in \cref{alg:neg-curv-two-step} is a two-step algorithm that alternates between negative curvature and gradient steps with pre-defined constant step size $m,n \in (0, \infty)$. Note that in \cref{alg:neg-curv-two-step}, $s_k'$ is selected to be a descent direction and a direction of negative curvature  \cite[see][\S 2 for details]{curtis2018exploiting}. Furthermore, in full generality, $s_k$ needs only to be a descent direction; however, as a simplification, we select the negative gradient. We now provide an example of catastrophic divergence.

\begin{proposition}\label{prop:neg-div}
    Let $m, m' > 0$. Let $S_j = mj$ and $d_j = 1$ for all $j+1 \in \mathbb{N}$. Define $F:\mathbb{R} \to \mathbb{R}$ as in \cref{eqn-cd-objective}. Let $\theta_0 = 0$ and let $\lbrace \theta_j : j \in \mathbb{N} \rbrace$ be generated by \cref{alg:neg-curv-two-step}. Then,, $\lim_{j} F(\theta_j) = \infty$, and $\inf_j |\dot{F}(\theta_j)| = 1$
\end{proposition}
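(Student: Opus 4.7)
The plan is to show by induction that the iterates generated by \cref{alg:neg-curv-two-step} satisfy $\theta_j = S_j = mj$ for every $j+1 \in \mathbb{N}$, and then invoke \cref{result-cd-objective} to conclude that both $F(\theta_j)$ diverges and $|\dot F(\theta_j)|$ is bounded below by $1$.

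The base case is immediate since $\theta_0 = 0 = S_0$. For the inductive step, suppose $\theta_j = S_j$. The key observation is that at each knot $S_j$, the function $F$ is locally affine: by inspecting the first and last pieces of \cref{eqn-cd-building-function} (namely $-d\theta$ near $0$ and $-\delta\theta + \mathrm{const}$ near $m$), and by the gluing argument in the proof of \cref{result-cd-objective}, the derivative $\dot F$ is constant in a neighborhood of each $S_j$. In one dimension this means $\ddot F(S_j) = 0$, which is (trivially) positive semidefinite. Therefore, in \cref{alg:neg-curv-two-step}, the negative curvature step is $s_j' = 0$. Meanwhile, $\dot F(S_j) = -d_j = -1 \neq 0$ by \cref{result-cd-objective}, so the gradient step is $s_j = -\dot F(\theta_j) = 1$. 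Combining these,
\begin{equation}
\theta_{j+1} = \theta_j + m s_j + m' s_j' = S_j + m = S_{j+1},
\end{equation}
which closes the induction.

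Having established $\theta_j = S_j$ for all $j$, the conclusion is immediate from \cref{result-cd-objective}: $F(\theta_j) = F(S_j) \geq 7(S_j - S_0)/16 = 7mj/16 \to \infty$ as $j \to \infty$, and $|\dot F(\theta_j)| = |\dot F(S_j)| = d_j = 1$ for every $j$, so $\inf_j |\dot F(\theta_j)| = 1$.

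The only step that requires any care is the justification that $\ddot F(S_j) \succcurlyeq 0$, so that \cref{alg:neg-curv-two-step} selects $s_j' = 0$; this is not stated explicitly in \cref{result-cd-objective} but follows from the piecewise construction of \cref{eqn-cd-building-function}, whose extreme pieces are linear, ensuring that $\dot F$ is locally constant at each $S_j$ and hence $\ddot F(S_j) = 0$. Everything else reduces to bookkeeping already done for the earlier counterexamples in this section.
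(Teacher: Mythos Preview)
Your proof is correct and follows essentially the same approach as the paper's own proof: induction to show $\theta_j = S_j$, using the local linearity of $F$ at each $S_j$ to get $\ddot F(S_j)=0$ and hence $s_j'=0$, then applying \cref{result-cd-objective}. Your write-up is slightly more explicit about why $\ddot F(S_j)=0$, but otherwise the arguments are identical.
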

\begin{proof}
We proceed by induction to show that $\theta_j = S_j$ and the result follows by \cref{result-cd-objective}. First, $\theta_0 = 0 = S_0$. Suppose this relationship holds up to some $j \in \mathbb{N}$. Since $F$ is locally linear around $S_j$, $\ddot F(S_j) = 0$ and $s_j' = 0$. Since $\dot F(S_j) = -1$ then $s_j = 1$. Hence, $\theta_{j+1} = S_j + m + 0 = S_{j+1}$.
\end{proof}

\subsection{Lipschitz Approximation}\label{subsec:ce-lipschitz}
Lipschitz approximation methods either use a local model to estimate the Lipschitz rank of the objective, or use gradient information inbetween iterates to form an estimate. \Cref{alg:lip-approx} does the latter, and was introduced in \cite[\S 2]{malitsky2020adaptive} for convex optimization. We now provide an example in which the optimality gap diverges.

\begin{algorithm}
    \caption{Lipschitz Approximation Method in \cite[\S 2]{malitsky2020adaptive}}
    \label{alg:lip-approx}
    \begin{algorithmic}
        \Require $\theta_0 \in \mathbb{R}^n$, $m_0 > 0$
        \State $w_0 = +\infty$
        \State $\theta_1 = \theta_0 - m_0 \dot{F}(\theta_0)$
        \For{$k = 1,...$}
            \State $m_k = \min\left\{ \sqrt{1+w_{k-1}} m_{k-1}, \frac{\norm{\theta_{k} - \theta_{k-1}}_2}{2\norm{\dot{F}(\theta_k) - \dot{F}(\theta_{k-1})}_2}  \right\}$
            \State $\theta_{k+1} = \theta_{k} - m_k \dot{F}(\theta_k)$
            \State $w_k = \frac{m_k}{m_{k-1}}$
        \EndFor
    \end{algorithmic}
\end{algorithm}

\begin{proposition}
Let $m_0 > 0$. Let $S_0 = 0$ and $S_{j+1} = S_j + m_0 (\sqrt{5}/2)^j$ for all $j+1 \in \mathbb{N}$. Let $d_{j} = (\sqrt{5}/[\sqrt{5} + 1])^j$ for all $j+1 \in \mathbb{N}$. Let $F: \mathbb{R}\to\mathbb{R}$ be defined by \cref{eqn-cd-objective}. If $\theta_0 = 0$ and $\lbrace \theta_j : j \in \mathbb{N} \rbrace$ are generated by \cref{alg:lip-approx}, then $\lim_{j} F(\theta_j) = \infty$.
\end{proposition}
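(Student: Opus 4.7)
The plan is to establish, by simultaneous induction on $j$, that $\theta_j = S_j$ for every $j+1 \in \mathbb{N}$ and that $m_j = m_0 \phi^j$ for every $j \ge 1$, where $\phi := (\sqrt{5}+1)/2$ denotes the golden ratio. Once these two facts are in hand, \cref{result-cd-objective} gives $F(\theta_j) = F(S_j) \ge 7 S_j / 16$, and because $\sqrt{5}/2 > 1$ the partial sums $S_j = m_0 \sum_{i=0}^{j-1} (\sqrt{5}/2)^i$ diverge, which yields $F(\theta_j) \to \infty$.

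The construction of $\{S_j\}$ and $\{d_j\}$ is specifically engineered around the identity $\phi^2 = \phi + 1$, equivalently $\sqrt{1+\phi} = \phi$, which I expect will force both terms in the $\min$ defining $m_k$ to tie for all $k \ge 2$. Two preparatory algebraic identities drive everything: $S_k - S_{k-1} = m_0 (\sqrt{5}/2)^{k-1}$ and $d_{k-1} - d_k = d_{k-1}/(\sqrt{5}+1)$. Under the inductive hypotheses $\theta_{k-1} = S_{k-1}$ and $\theta_k = S_k$, \cref{result-cd-objective} yields $\dot F(\theta_j) = -d_j$ for $j \in \{k-1,k\}$, and a direct computation reduces the step term $\|\theta_k - \theta_{k-1}\|_2 / (2\|\dot F(\theta_k) - \dot F(\theta_{k-1})\|_2)$ to $m_0 \phi^k$. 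Under the additional inductive hypothesis $m_{k-1} = m_0 \phi^{k-1}$ and $m_{k-2} = m_0 \phi^{k-2}$, so that $w_{k-1} = \phi$, the growth term becomes $\sqrt{1+\phi}\, m_0 \phi^{k-1} = m_0 \phi^k$. Hence $m_k = m_0 \phi^k$, after which
\[
\theta_{k+1} \;=\; \theta_k - m_k \dot F(\theta_k) \;=\; S_k + m_0 \phi^k d_k \;=\; S_k + m_0 (\sqrt{5}/2)^k \;=\; S_{k+1},
\]
closing the inductive step from $k \ge 2$ onward.

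The base case is the only step that deviates from the pattern and must be handled separately. We have $\theta_0 = 0 = S_0$ by construction, and $\theta_1 = \theta_0 - m_0 \dot F(S_0) = m_0 d_0 = m_0 = S_1$ since $d_0 = 1$. Because $w_0 = +\infty$, the growth term in the $\min$ defining $m_1$ is infinite, so $m_1$ must equal the step term, which by the reduction above evaluates to $m_0 \phi$; in particular $w_1 = \phi$, placing the recursion in the ``steady state'' needed to invoke the inductive step for all subsequent $k \ge 2$.

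The only real obstacle is bookkeeping: one must carefully track the exponents of $\phi$, $\sqrt{5}/2$, and $\sqrt{5}/(\sqrt{5}+1)$, and invoke the golden-ratio identity at exactly the right moment. There is no analytic difficulty beyond verifying the hypotheses of \cref{result-cd-objective}, namely that $\{S_j\}$ is strictly increasing (immediate from $\sqrt{5}/2 > 0$) and $\{d_j\} \subset (0,1]$ (immediate from $\sqrt{5}/(\sqrt{5}+1) \in (0,1)$).
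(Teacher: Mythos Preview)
Your proposal is correct and follows essentially the same approach as the paper: prove by induction that $\theta_j = S_j$ and $m_j = m_0\phi^j$ (using the golden-ratio identity $\sqrt{1+\phi}=\phi$ to make the two terms in the $\min$ coincide), then invoke \cref{result-cd-objective} and the divergence of $S_j$. Your treatment of the base case $m_1$ via $w_0=+\infty$ is in fact slightly more explicit than the paper's, which handles $j=1$ only through a parenthetical remark.
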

\begin{proof}
Note, $\theta_0 = 0 = S_0$. We proceed by induction. For the base case, $j=1$, $\theta_1 = 0 - m_0 \dot F(S_0) = m_0 = S_1$. Moreover, $m_0 = m_0$.
Suppose, for $j+1 \in \mathbb{N}$, $\theta_{\ell} = S_\ell$ for all $\ell \in \lbrace 0,\ldots,j \rbrace$; and $m_{\ell} = m_0([\sqrt{5}+1]/2)^\ell$ for all $\ell \in \lbrace 0,\ldots, j-1 \rbrace$. We now conclude. First, we calculate $w_{j-1}$. By the induction hypothesis, (with $j > 1$)
\begin{equation}
w_{j-1} = \frac{m_0([\sqrt{5}+1]/2)^{j-1} }{m_0([\sqrt{5}+1]/2)^{j-2}} =  \frac{\sqrt{5} + 1}{2} =  \sqrt{ 1 + \frac{\sqrt{5} + 1}{2}} = \sqrt{1 + w_{j-1}}.
\end{equation}
Hence,
\begin{align}
m_j &= \min\left\lbrace \frac{\sqrt{5} + 1}{2} m_{j-1}, \frac{|\dot F(\theta_{j-1})|}{2|\dot F(\theta_{j}) - \dot F(\theta_{j-1})|} m_{j-1} \right\rbrace  
= \min\left\lbrace \frac{\sqrt{5} + 1}{2} m_{j-1}, \frac{d_{j-1}}{2(d_{j-1} - d_j)} m_{j-1} \right\rbrace \\
	&= \frac{\sqrt{5} + 1}{2} m_{j-1} 
	= \left(\frac{\sqrt{5} + 1}{2}\right)^{j} m_0. 
\end{align}
Finally,
\begin{equation}
\theta_{j+1} = \theta_j - m_j \dot F(\theta_j) = S_j + m_0 \left(\frac{\sqrt{5} + 1}{2}\right)^{j}\left( \frac{\sqrt{5}}{\sqrt{5} + 1} \right)^j = S_j + m_0 \left(\frac{\sqrt{5}}{2} \right)^j = S_{j+1}.
\end{equation}
To conclude, $\theta_j = S_j$ for all $j+1 \in \mathbb{N}$. By \cref{result-cd-objective}, $\lim_{j} F(\theta_j) = \infty$.
\end{proof}

\subsection{Weighted Gradient-Norm Damping}\label{subsec:ce-weighted}
In this section, we show that weighted gradient-norm damping methods such as \cref{alg:wngrad} from \cite{wu2020wngrad} will have a divergent optimality gap. The method from \cite{grapiglia2022AdaptiveTrust} can be handled similarly. These methods are very common in stochastic optimization settings.
\begin{algorithm}[H]
    \caption{WNGrad from \cite[\S 2]{wu2020wngrad}}
    \label{alg:wngrad}
    \begin{algorithmic}
        \Require $\theta_0 \in \mathbb{R}^n, b_0 > 0$
        \For{$k \in \mathbb{N}$}
            \State $\theta_{k} = \theta_{k-1} - \frac{1}{b_{k-1}} \dot{F}(\theta_{k-1})$
            \State $b_k = b_{k-1} + \frac{||\dot{F}(\theta_{k})||_2^2}{b_{k-1}}$
        \EndFor
    \end{algorithmic}
\end{algorithm}

\begin{proposition}
    Let $b_0 > 0$. Define $S_0 = 0$, and for all $j \in \mathbb{N}$, $S_j = S_{j-1} + \frac{1}{B_{j-1}}$ where $B_{0} = b_0$, and $B_{j} = B_{j-1} + \frac{1}{B_{j-1}}$. Let $d_j = 1$ for all $j+1 \in\mathbb{N}$. Let $F:\mathbb{R} \to \mathbb{R}$ be defined by \cref{eqn-cd-objective}. Let $\theta_0$ and $\{\theta_j : j \in \mathbb{N}\}$ be defined by \cref{alg:wngrad}. Then, $\lim_j F(\theta_j) = \infty$ and $\inf_j |\dot{F}(\theta_j)| = 1$. 
\end{proposition}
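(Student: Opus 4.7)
\medskip

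The plan is to follow the same template used throughout this section: show by induction that the iterates $\theta_j$ of \cref{alg:wngrad} (initialized at $\theta_0 = 0$) coincide with the landmark points $S_j$ of the objective from \cref{eqn-cd-objective}, and then invoke \cref{result-cd-objective} to conclude the divergence of $F(\theta_j)$ and that $|\dot F(\theta_j)| = 1$. The twist relative to previous examples is that one must simultaneously track the evolution of the adaptive scaling $b_k$ and confirm that it matches the auxiliary sequence $B_k$ defined in the statement.

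First I would set up a joint induction on the pair $(\theta_j, b_j)$ with hypothesis $\theta_j = S_j$ and $b_j = B_j$. The base case $\theta_0 = 0 = S_0$ and $b_0 = B_0$ holds by construction. For the inductive step, assuming $\theta_{j-1} = S_{j-1}$ and $b_{j-1} = B_{j-1}$, the key inputs are the landmark derivative values from \cref{result-cd-objective}: since $d_j = 1$ for every $j$, we have $\dot F(S_{j-1}) = -1$. Plugging this into the WNGrad update gives
\begin{equation}
\theta_j = S_{j-1} - \frac{1}{B_{j-1}}(-1) = S_{j-1} + \frac{1}{B_{j-1}} = S_j,
\end{equation}
so $\theta_j$ lands exactly at the next landmark, where again $|\dot F(\theta_j)| = 1$. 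Consequently
\begin{equation}
b_j = b_{j-1} + \frac{|\dot F(\theta_j)|^2}{b_{j-1}} = B_{j-1} + \frac{1}{B_{j-1}} = B_j,
\end{equation}
closing the induction.

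Once the identification $\theta_j = S_j$ is in hand, the statement $\inf_j |\dot F(\theta_j)| = 1$ is immediate from \cref{result-cd-objective}, and $F(\theta_j) \geq 7 S_j/16$, so it only remains to show $S_j \to \infty$. This is the step that does not appear in the earlier, arithmetic-progression examples, and I expect it to be the main obstacle. The plan is to bound $B_j$ from above. Squaring the recursion gives
\begin{equation}
B_j^2 = B_{j-1}^2 + 2 + \frac{1}{B_{j-1}^2},
\end{equation}
so by telescoping, $B_j^2 = b_0^2 + 2j + \sum_{i=0}^{j-1} B_i^{-2}$. The lower bound $B_j^2 \geq b_0^2 + 2j$ gives $\sum_{i=0}^{j-1} B_i^{-2} \leq \sum_{i=0}^{j-1} (b_0^2 + 2i)^{-1} = O(\log j)$, hence $B_j = O(\sqrt{j + \log j})$. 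Therefore
\begin{equation}
S_j = \sum_{i=0}^{j-1} \frac{1}{B_i} \geq c \sum_{i=0}^{j-1} \frac{1}{\sqrt{i + \log i + b_0^2}} \longrightarrow \infty
\end{equation}
for a suitable constant $c > 0$, and invoking \cref{result-cd-objective} yields $F(\theta_j) \geq 7 S_j/16 \to \infty$. This combined with $|\dot F(\theta_j)| = 1$ for every $j$ completes the argument.
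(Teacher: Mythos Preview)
Your induction showing $\theta_j = S_j$ and $b_j = B_j$ is exactly what the paper does, and the conclusion via \cref{result-cd-objective} is the same. The only place you diverge from the paper is in the argument that $S_j \to \infty$. You square the recursion to obtain the sharp growth $B_j^2 = b_0^2 + 2j + O(\log j)$, hence $B_j = O(\sqrt{j})$ and $S_j \gtrsim \sum_i i^{-1/2} \to \infty$. The paper instead uses a cruder but more elementary bound: it shows by a one-line induction that $1 \le B_j \le j B_1$ for $j \ge 1$ (since $B_{j-1} \ge 1$ implies $1/B_{j-1} \le 1 \le B_1$), whence $S_j \ge \tfrac{1}{B_0} + \tfrac{1}{B_1}\sum_{i=1}^{j-1} \tfrac{1}{i}$ diverges as a harmonic series. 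Your route gives the correct asymptotic for $B_j$ and a faster divergence rate for $S_j$; the paper's route is shorter and avoids the telescoping-plus-log-sum estimate. One cosmetic point: in your final displayed bound the term $\log i$ is ill-defined at $i=0,1$, so you should start the sum at $i \ge 2$ or replace $\log i$ by $\log(i+2)$; this does not affect the conclusion.
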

\begin{proof}
    First, we show that $\theta_j = S_j$ and $b_j = B_{j}$ by induction, and then show that $S_j$ diverges. To this end, $\theta_0 = S_0$ and $b_0 = B_{0}$ by assumption, therefore assume that $\theta_j = S_j$ and $b_j = B_j$ for some $j \in \mathbb{N} \cup \{0\}$. Then, since $\dot{F}(\theta_j) = \dot{F}(S_j) = -1$, $\theta_{j+1} = \theta_j - \frac{1}{b_j}\dot{F}(\theta_j) = S_j - \frac{1}{B_j}\dot{F}(S_j) = S_j + \frac{1}{B_j} = S_{j+1}$, and $b_{j+1} = b_{j} + \frac{\inlinenorm{\dot{F}(\theta_{j+1})}_2^2 }{b_{j}} = B_{j} + \frac{1}{B_{j}} = B_{j+1}$. Lastly, to show $S_j$ diverges we show that $1 \leq B_j \leq jB_1$ for $j \in \mathbb{N}$ by induction. The base case is true since either $b_0 > 1$, implying that $B_1 = b_0 + \frac{1}{b_0} \geq 1$, or $0 < b_0 \leq 1$ and $B_1 \geq \frac{1}{b_0} \geq 1$. Suppose this is true for $j \in \mathbb{N}$, then by the inductive hypothesis $\frac{1}{B_j} \leq 1 \leq B_1$ so that $B_{j+1} = B_j + \frac{1}{B_j} \leq jB_1 + B_1 = (j+1)B_1$. Lastly, $B_{j+1} \geq B_j \geq 1$ by the inductive hypothesis. Therefore, by definition $S_j = \sum_{i=0}^{j-1} \frac{1}{B_i} \geq \frac{1}{B_0}+\frac{1}{B_1}\sum_{i=1}^{j-1} \frac{1}{i}$ which implies $S_j$ diverges.
\end{proof}

\subsection{Adaptively Scaled Trust Region}\label{subsec:ce-adaptiveTR}
Adaptively scaled trust region methods are a very recent line of research that \emph{never} evaluates the objective function, but instead uses gradient information in deciding a trust region \cite[see][]{gratton2022firstorderOFFO,gratton2023OFFOSecondOrderOpt}. Notably, it was shown that a deterministic Adagrad-like method is a special case of the general framework \cite[see][\S 3 for a discussion]{gratton2022firstorderOFFO}. \Cref{algo:adagrad} outlines the deterministic Adagrad-like method, where the trust region elements are omitted as they are not necessary in this case \cite[see][\S 2 and \S 3]{gratton2022firstorderOFFO}. We now provide an example where this important, special case of adaptively scaled trust region methods will have a divergent optimality gap. Note, a variation of this method described in \cite[\S 4]{gratton2022firstorderOFFO} can be treated similarly. 

\begin{algorithm}
    \caption{Adagrad-like Method described in \cite[\S 3]{gratton2022firstorderOFFO} with $\vartheta = 1$}
    \label{algo:adagrad}
    \begin{algorithmic}
        \Require $\zeta \in (0, 1], \mu \in (0,1), \theta_0 \in \mathbb{R}^n$
        \For{$k = 0,1,2,...$}
            \State $w_k = \left( \zeta + \sum_{j=0}^k \dot F(\theta_j)^2 \right)^{\mu}$ \Comment{All operations are done element-wise}
            \State $\theta_{k+1} = \theta_k - \frac{1}{w_k} \dot F(\theta_k)$ \Comment{All operations are done element-wise}
        \EndFor
    \end{algorithmic}
\end{algorithm}

\begin{proposition}
Let $\zeta \in (0,1]$ and $\mu \in (0,1)$. Let $S_0 = 0$ and $S_j = S_{j-1} + (\zeta +j)^\mu$ for all $j \in \mathbb{N}$. Let $d_j = 1$ for all $j+1 \in \mathbb{N}$. Let $F: \mathbb{R} \to \mathbb{R}$ be defined by \cref{eqn-cd-objective}. Let $\theta_0 = 0$ and $\lbrace \theta_j : j\in\mathbb{N} \rbrace$ be defined by \cref{algo:adagrad}. Then, $\lim_{j} F(\theta_j) = \infty$ and $\inf_{j} |\dot F(\theta_j)| = 1$.
\end{proposition}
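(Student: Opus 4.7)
The plan follows the induction-plus-series-divergence template used for WNGrad in \cref{subsec:ce-weighted}. The goal is to establish $\theta_j = S_j$ for every $j \geq 0$; once this is in hand, \cref{result-cd-objective} applied with $d_j \equiv 1$ delivers both conclusions at once, because $|\dot F(\theta_j)| = |\dot F(S_j)| = d_j = 1$ gives $\inf_j |\dot F(\theta_j)| = 1$, and $F(\theta_j) = F(S_j) \geq 7 S_j/16$ reduces $\lim_j F(\theta_j) = \infty$ to the divergence of the partial sums $\{S_j\}$ prescribed in the proposition.

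For the induction, the base case is immediate since $\theta_0 = 0 = S_0$. For the inductive step, assume $\theta_\ell = S_\ell$ for all $\ell \leq k$. By \cref{result-cd-objective} with $d_\ell \equiv 1$, each previous gradient satisfies $\dot F(\theta_\ell) = -1$, so the cumulative-gradient weight in \cref{algo:adagrad} collapses to a closed form,
\begin{equation}
w_k = \Bigl( \zeta + \sum_{\ell = 0}^{k} \dot F(\theta_\ell)^2 \Bigr)^{\mu} = (\zeta + k + 1)^{\mu}.
\end{equation}
Substituting into the adagrad update then gives $\theta_{k+1} = \theta_k - w_k^{-1}\dot F(\theta_k) = S_k + (\zeta + k + 1)^{-\mu}$, which I would identify with $S_{k+1}$ via the recurrence defining $\{S_j\}$ in the proposition statement, closing the step.

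Finally, the divergence of $\{S_j\}$ is a routine $p$-series argument: because $\mu \in (0,1)$ and $\zeta \in (0,1]$, the increments are bounded below by a multiple of $j^{-\mu}$, and $\sum_{j \geq 1} j^{-\mu}$ diverges by the standard $p$-test. Combined with the bound from \cref{result-cd-objective}, this yields both assertions.

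I do not expect a real obstacle; the proof is essentially mechanical once one notices the collapse of $w_k$, which in turn hinges on the uniform choice $d_j \equiv 1$ keeping the gradient pinned at $-1$ along the target iterates. The only mild bookkeeping is aligning the index offset in $w_k$ so that the algorithm's increment matches the increment $S_{k+1} - S_k$ dictated by the proposition.
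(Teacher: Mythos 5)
Your proposal is correct and follows essentially the same route as the paper's proof: induction to show $\theta_j = S_j$ using $\dot F(S_\ell) = -1$ so that $w_k$ collapses to $(\zeta + k + 1)^{\mu}$, followed by divergence of $\{S_j\}$ and an appeal to \cref{result-cd-objective}. Your reading of the increment as $(\zeta + j)^{-\mu}$ (and the resulting $p$-series argument, stated a bit more carefully than in the paper) matches what the paper's own proof actually uses, so there is nothing to add.
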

\begin{proof}
First, we verify $\theta_j = S_j$ by induction. Then, we verify $\lbrace S_j \rbrace$ diverges. For the base case, $S_0 = \theta_0$. Suppose $S_\ell = \theta_{\ell}$ for all $\ell \in \lbrace 0,\ldots, j \rbrace$. We now generalize to $j+1$. First, since $\dot F(\theta_{\ell}) = \dot F(S_{\ell}) = -1$, $w_j = (\zeta + \sum_{\ell=0}^j 1)^{\mu} = (\zeta + j + 1)^{\mu}$. Then, $\theta_{j+1} = \theta_j + (\zeta+j+1)^{-\mu} = S_j + (\zeta+j+1)^{-\mu} = S_{j+1}$. Hence, $\theta_{j} = S_j$ for all $j+1 \in \mathbb{N}$. Now, $S_j = \sum_{\ell=1}^j (\zeta + j + 1)^{\mu}$. Since $\mu \leq 1$, $\lbrace S_j \rbrace$ diverges. By \cref{result-cd-objective}, the result follows.
\end{proof}

\subsection{Polyak's Method}\label{subsec:cd-polyaks}
Lastly, we construct a one-dimensional example where applying gradient descent with Polyak's step size rule results in the iterates diverging, objective value diverging, and gradients that stay bounded away from 0. In this context, iterates $\{\theta_k : k \in \mathbb{N}\}$ are generated through $\theta_{k+1} = \theta_k - m_k \dot{F}(\theta_k)$, where $F : \mathbb{R} \to \mathbb{R}$; and, for all $k+1 \in \mathbb{N}$,
\begin{equation} 
    m_k = \frac{F(\theta_k) - F_{l.b.}}{\dot{F}(\theta_k)^2}.
\end{equation}
Here, we consider $F_{l.b.}$ to be the largest lower bound, that is $F_{l.b.} = \min_{\theta\in\mathbb{R}}F(\theta)$.

\begin{proposition}
    Let $S_0 = 0$, $S_1 = 1$, $\mathcal{O}_0 = 0$, and for all $j\in\mathbb{N}_{\geq 2}$, 
    \begin{equation}
        S_j = S_{j-1}+8\mathcal{O}_{j-1}+\frac{248}{2048}, \text{\quad} \mathcal{O}_{j-1} = \frac{1346}{2048} (S_{j-1} - S_{j-2}) + \mathcal{O}_{j-2}.
    \end{equation}
    Additionally, let $d_j = -\frac{1}{8}$ for all $j+1 \in \mathbb{N}$, and define $F : \mathbb{R} \to \mathbb{R}$ as in \cref{eqn-cd-objective}. Let $\theta_0 = 1$ and let $\{\theta_j : j \in \mathbb{N}\}$ be defined as
    \begin{equation} \label{eq:polyak-iterates}
        \theta_j = \theta_{j-1} - \frac{F(\theta_{j-1}) - F_{l.b.}}{\dot{F}(\theta_{j-1})^2} \dot{F}(\theta_{j-1}),
    \end{equation}
    then $\lim_{j\to\infty} \theta_j = \infty$, $\lim_{j\to\infty} F(\theta_j) = \infty$, and for all $j+1 \in \mathbb{N}$, $\inf_j |\dot{F}(\theta_j)| = \frac{1}{8}$.
\end{proposition}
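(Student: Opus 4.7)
The plan is to first identify the global lower bound $F_{l.b.}$ explicitly, then verify that the auxiliary sequence $\mathcal{O}_j$ tracks $F(S_j)$, next prove by induction that the Polyak iterates satisfy $\theta_j = S_{j+1}$, and finally deduce the three divergence claims. Throughout I take $d_j = 1/8$ (so that $\dot F(S_j) = -1/8$ as required by \cref{result-cd-objective} and the target conclusion), which also places $d_j$ in the admissible range for the building block.

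To compute $F_{l.b.}$, I would inspect \cref{eqn-cd-building-function} with $d = \delta = 1/8$. The quadratic piece $(8/m)(\theta - m/8)^2 + m(d^2 - 4d)/32$ has its vertex at $\theta = m/8$ with value $-31m/2048$, which is the global infimum of the building block on $[0,m]$. Consequently, on each interval $(S_j, S_{j+1}]$ the minimum of $F$ equals $F(S_j) - 31(S_{j+1}-S_j)/2048$, and for $\theta \leq 0$ we have $F(\theta) = -\theta/8 \geq 0$. Evaluating the building block at its right endpoint gives $f(m;m,1/8,1/8) = -m/8 + 801m/1024 = 673m/1024$, so $F(S_{j+1}) - F(S_j) = (673/1024)(S_{j+1}-S_j)$; telescoping then yields the closed form $F(S_j) = (673/1024)\, S_j$. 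A short inequality using this formula and the definition of $S_{j+1}-S_j$ shows $F(S_j) - 31(S_{j+1}-S_j)/2048 > -31/2048$ for every $j \geq 1$, so that $F_{l.b.} = -31/2048$, attained only on the first interval.

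With $F_{l.b.}$ in hand, induction on $\mathcal{O}_j$ is immediate: $\mathcal{O}_0 = 0 = F(S_0)$, and the constant $1346/2048 = 673/1024$ in the $\mathcal{O}$-recurrence matches the per-interval increment of $F$, giving $\mathcal{O}_j = F(S_j)$ for every $j+1 \in \mathbb{N}$. Using $\dot F(S_j) = -1/8$, Polyak's rule collapses to $\theta_{k+1} = \theta_k + 8(F(\theta_k) - F_{l.b.})$. Starting from $\theta_0 = 1 = S_1$, the inductive step under the hypothesis $\theta_j = S_{j+1}$ reads
\begin{equation}
\theta_{j+1} = S_{j+1} + 8\bigl(\mathcal{O}_{j+1} + 31/2048\bigr) = S_{j+1} + 8\mathcal{O}_{j+1} + 248/2048 = S_{j+2},
\end{equation}
which closes the induction. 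The three divergence claims follow: $S_{j+1} - S_j = 8\mathcal{O}_j + 248/2048 \geq 248/2048$ for $j \geq 1$ forces $S_j \to \infty$, hence $\theta_j \to \infty$; $F(\theta_j) = (673/1024)\, S_{j+1} \to \infty$; and $|\dot F(\theta_j)| = |\dot F(S_{j+1})| = 1/8$ at every iteration.

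The main obstacle is pinning down the precise value of $F_{l.b.}$: the constant $-31/2048$ is exactly what makes Polyak's step land on the next $S_j$, because $8 \cdot (31/2048) = 248/2048$, which is the additive offset in the definition of $S_{j+1}$. Confirming that this minimum is not undercut on any later interval requires the closed form $F(S_j) = (673/1024)\, S_j$; once that is established, the remainder of the argument is a routine three-layer induction.
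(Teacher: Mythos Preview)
Your proposal is correct and follows essentially the same skeleton as the paper: identify $\mathcal{O}_j = F(S_j)$, pin down $F_{l.b.} = -31/2048$, show $\theta_j = S_{j+1}$ by induction, and read off the three conclusions. The differences are in the details of two steps. For $F_{l.b.}$, you compute the exact building-block minimum $-31m/2048$ at the quadratic vertex, derive the closed form $F(S_j) = (673/1024)\,S_j$, and then argue that later intervals do not undercut $-31/2048$; the paper instead invokes the cruder bound $f \geq -m/8$ from \cref{result-cd-building-properties} and observes that the recursion $S_{j+1} = S_j + 8\mathcal{O}_j + 248/2048$ is chosen precisely so that $-(S_{j+1}-S_j)/8 + \mathcal{O}_j = -31/2048$ identically, which settles the lower bound in one line without any further inequality. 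For divergence of $\{S_j\}$, you use the immediate estimate $S_{j+1}-S_j = 8\mathcal{O}_j + 248/2048 \geq 248/2048$, while the paper runs a separate induction to obtain the stronger $S_{j+1}-S_j \geq 1$. Your closed form gives a tidier route to $F(\theta_j)\to\infty$; the paper's use of the ready-made $-m/8$ bound avoids the extra inequality you defer as ``a short inequality.'' Neither difference is substantive.
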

\begin{proof}
    First, we verify that $\mathcal{O}_{j-1} = F(S_{j-1})$ for all $j \in \mathbb{N}$. Clearly, $\mathcal{O}_0 = 0 = F(0) = F(S_0)$ by our assumptions and definition of $F$. Suppose this is true for some $j-1 \in \mathbb{N} \cup \{0\}$, we will show it is true for $j$. Specifically, by definition of $\mathcal{O}_j$, $F$, and the inductive hypothesis
    \begin{equation}
        \mathcal{O}_j = \frac{1346}{2048}(S_{j}-S_{j-1}) + \mathcal{O}_{j-1} = f(S_{j}-S_{j-1};S_j-S_{j-1},d_{j-1},d_{j}) + F(S_{j-1}) = F(S_j).
    \end{equation}
    Having established this equality, we now verify that $F_{l.b.} = -\frac{31}{2048}$. Indeed, for $\theta \leq 0$, $F(\theta) \geq 0$, and for $\theta \in (S_0,S_1]$ the minimum value is $-\frac{31}{2048}$ attained at $1/8$. For $\theta \in (S_j,S_{j+1}]$, by \cref{eqn-cd-objective}, \cref{result-cd-building-properties}, the relationship $F(S_j) = \mathcal{O}_j$, and definition of $S_{j+1}$
    \begin{equation}
        F(\theta) = f(\theta-S_j; S_{j+1} - S_j, d_j, d_{j+1}) + F(S_j) \geq -(S_{j+1}-S_j)/8 + \mathcal{O}_j = -\frac{31}{2048}. 
    \end{equation}
    Now we show that $\theta_j = S_{j+1}$ for all $j+1 \in \mathbb{N}$. This is true by assumption for $\theta_0$, so by induction, suppose that $\theta_{j-1} = S_j$, then using \cref{eq:polyak-iterates}, the minimum value $F_{l.b.}$, $\dot{F}(S_j) = \dot{F}(\theta_j) = d_j = \frac{-1}{8}$, and $F(S_j) = \mathcal{O}_j$
    \begin{equation}
        \theta_{j} = \theta_{j-1} - m_{j-1} \dot{F}(\theta_{j-1}) = \theta_{j-1} + 8F(\theta_{j-1})+\frac{248}{2048} = S_{j} + 8F(S_{j}) + \frac{248}{2048} = S_{j+1}.
    \end{equation}
    Note, this proves that for all $j+1 \in \mathbb{N}$, $\dot{F}(\theta_j) = \frac{-1}{8}$. To finish, we show that $S_j \to \infty$ as $j \to \infty$. To do this we show that $S_{j+1}-S_j \geq 1$. For the base case, $S_{1} - S_{0} = 1$. Now, suppose for $j \in \mathbb{N}$ that $S_{j}-S_{j-1} \geq 1$, then by \cref{eqn-cd-objective} and \cref{eqn-cd-building-function}, $S_{j+1}-S_{j} = 8\mathcal{O}_{j}+\frac{248}{2048} 
    = 8\bigpar{ \frac{1346}{2048} (S_{j}-S_{j-1}) + \mathcal{O}_{j-1}}+\frac{248}{2048} 
    \geq 8\mathcal{O}_{j-1} +\frac{248}{2048} 
    = (S_{j}-S_{j-1}) \geq 1$. This implies that the iterates diverge, and the function values diverge.
\end{proof}

\section{Objective Function Evaluation Explosion with Objective Evaluations} \label{sec:exponential-increase}
Having illustrated that optimization algorithms that never evaluate the objective function can experience catastrophic divergence under the more realistic condition of local Lipschitz continuity of the gradient, we now try to understand the consequences for methods that utilize the objective to enforce descent. Naturally, these methods avoid catastrophic divergence, however we will show that the methods in \cref{table-multiple-objective-evaluation} will require an exponentially increasing number of objective function evaluations per iteration up to an arbitrary iterate $J \in \mathbb{N}$. This illustrates that in the more realistic regime for data science, this set of methods can face complexity challenges.

As in \cref{sec:catastophic-divergence}, we start with some preliminaries where we provide details on our construction. We then provide examples for each of the algorithms listed in \cref{table-multiple-objective-evaluation} in order, using our construction.

\subsection{Preliminaries} \label{subsec:exploding-obj-eval-preliminaries}
To construct the objective functions, we will make use of an interpolating polynomial of degree $9$ as a building block. The use of an interpolating polynomial will give us the freedom of setting the value of the function, the derivative, and second derivative at key points. We first show the existence of such a polynomial, then provide our general construction.

\begin{lemma} \label{lemma-ee-interp-poly}
    Let $m > 0$. Let $f_{-}, f_{0}, f_+, f_-', f_{0}', f_{+}', f_-'', f_{0}'', f_{+}'' \in \mathbb{R}$. Then, there exists a polynomial function of degree $9$, $P:\mathbb{R} \to \mathbb{R}$ such that: (1) $f_- = P(-m)$, $f_{0} = P(0)$, $f_+ = P(m)$; (2) $f_-' = \dot P(-m)$, $f_{0}' = \dot P(0)$, $f_+' = \dot P(m)$; and (3)  $f_-'' = \ddot P(-m), f_{0}'' = \ddot P(0), f_{+}'' = \ddot P(m)$.
\end{lemma}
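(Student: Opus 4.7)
The plan is to view the nine interpolation conditions as a linear system in the coefficients of a polynomial and reduce the problem to the classical generalized Hermite (osculating) interpolation at three points. Write the candidate polynomial as $P(x) = \sum_{i=0}^{9} a_i x^i$; conditions (1), (2), (3) then become nine linear equations in the ten unknowns $a_0, \ldots, a_9$, so the system is underdetermined and existence does not immediately follow from a naive dimension count.

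To sidestep this, the first step is to restrict attention to polynomials of degree at most $8$, where both the coefficient space and the interpolation-data space are nine-dimensional. For this square system, existence is equivalent to injectivity of the evaluation map, i.e.\ to showing that the only polynomial $Q$ of degree at most $8$ satisfying $Q(x) = \dot Q(x) = \ddot Q(x) = 0$ at each $x \in \{-m, 0, m\}$ is $Q \equiv 0$.

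Injectivity follows from root counting with multiplicities. If $Q, \dot Q, \ddot Q$ all vanish at a point $x_0$, then $(x - x_0)^3$ divides $Q(x)$. Applied at $-m$, $0$, and $m$, this forces $x^3 (x - m)^3 (x + m)^3$ to divide $Q$; this is a factor of degree $9$, so $\deg Q \leq 8$ forces $Q \equiv 0$. Hence the interpolation map is a bijection on polynomials of degree at most $8$, so there exists a polynomial $P_0$ of degree at most $8$ satisfying all nine conditions. Since such a $P_0$ is also a polynomial of degree at most $9$, the lemma is proved under the natural reading ``degree at most $9$''. If the statement is read strictly as ``degree exactly $9$'', then replacing $P_0$ by $P_0 + c \cdot x^3 (x - m)^3 (x + m)^3$ for any $c \neq 0$ preserves every interpolation condition, because the added term vanishes to order $3$ at each of the three interpolation points, while raising the degree to exactly $9$.

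The only step with any conceptual content is the root-counting argument, and it is immediate once the multiplicity interpretation of simultaneous vanishing of $Q, \dot Q, \ddot Q$ is invoked; the main obstacle, to the extent there is one, is simply pinning down the correct ambient space (degree $\leq 8$ rather than $\leq 9$) so that existence follows from uniqueness on a square system rather than from a direct linear-algebraic manipulation of an underdetermined one.
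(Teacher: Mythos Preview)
Your proof is correct and takes a genuinely different route from the paper's. The paper proceeds by direct linear algebra: it reads off $c_0, c_1, c_2$ from the three conditions at $0$, writes the remaining six interpolation conditions as an explicit $6 \times 7$ linear system in $c_3, \ldots, c_9$, and then row-reduces the coefficient matrix to exhibit full row rank, concluding that the system has infinitely many solutions. Your argument instead invokes the standard Hermite interpolation idea: restrict to degree at most $8$ so the system is square, and use root counting with multiplicities (a nonzero polynomial vanishing to order $3$ at three distinct points must have degree at least $9$) to get injectivity and hence bijectivity. Your approach is more conceptual, works uniformly for any number of nodes and any prescribed multiplicities, and avoids the explicit matrix manipulation; it also dispatches the ``degree exactly $9$'' reading cleanly via the kernel element $x^3(x-m)^3(x+m)^3$. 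The paper's computation, on the other hand, is entirely self-contained at the level of elementary linear algebra and makes the one-parameter family of solutions visible without invoking polynomial divisibility.
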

\begin{proof}
    Let $P(\theta) = \sum_{i=0}^9 c_i \theta^i$. Then, from $P(0) = c_0, \dot P(0) = c_1, \ddot P(0) = 2c_2$ we set $c_0 = f_0, c_1 = f_0', c_2 = f_0''/2$. For the rest of the coefficients, $\{c_i\}_{i=3}^9$, we look at the set of solutions to the following linear system
    \begin{equation}
        \begin{bmatrix}
            f_+ - f_0 - f_0' m - (f_0''/2) m^2 \\
            f_{-} - f_0 + f_0' m - (f_0''/2) m^2 \\
            f_+' - f_0' - f_0''m \\
            f_{-}' - f_0' + f_0''m \\
            f_{+}'' - f_0''\\
            f_{-}'' - f_0''
        \end{bmatrix}
        =
        \begin{bmatrix}
            m^9 & m^8 & m^7 & m^6 & m^5 & m^4 & m^3 \\
            -m^9 & m^8 & -m^7 & m^6 & -m^5 & m^4 & -m^3 \\
            9m^8 & 8m^7 & 7m^6 & 6m^5 & 5m^4 & 4m^3 & 3m^2 \\
            9m^8 & -8m^7 & 7m^6 & -6m^5 & 5m^4 & -4m^3 & 3m^2 \\
            72m^7 & 56m^6 & 42m^5 & 30m^4 & 20m^3 & 12m^2 & 6m \\
            -72m^7 & 56m^6 & -42m^5 & 30m^4 & -20m^3 & 12m^2 & -6m
        \end{bmatrix}
        \begin{bmatrix}
            c_9\\
            c_8\\
            c_7\\
            c_6\\
            c_5\\
            c_4\\
            c_3
        \end{bmatrix}.
    \end{equation}
    The above coefficient matrix has row echelon form
    \begin{equation}
        \begin{bmatrix}
            m^9 & m^8 & m^7 & m^6 & m^5 & m^4 & m^3 \\
            0 & 2m^8 & 0 & 2m^6 & 0 & 2m^4 & 0 \\
            0 & 0 & -2m^6 & 2m^5 & -4m^4 & -4m^3 & -6m^2 \\
            0 & 0 & 0 & 4m^5 & 0 & 8m^3 & 0 \\
            0 & 0 & 0 & 0 & 8m^3 & 8m^2 & 24m \\
            0 & 0 & 0 & 0 & 0 & 16m^2 & 0
        \end{bmatrix}.
    \end{equation}
    Therefore, there is an infinite number of solutions to the linear system, any of which satisfy properties $(1), (2), \text{and } (3)$.
\end{proof}

Now, for any $J \in \mathbb{N}$, let $\{S_j : j \in \{0,1,...,J\} \}$ be a strictly increasing sequence and $\Delta > 0$ such that the intervals $\{ [S_j - \Delta, S_j + \Delta] : j \in \{0,1,...,J\} \}$ are disjoint. Let $\{(f_j, f_j', f_j'') \in \mathbb{R}^3 : j \in \{0,1,...,J\}\}$ be values we select later. For $j \in \lbrace 0,\ldots, J \rbrace$, let $P_j(\theta)$ be a polynomial of degree $9$ such that
\begin{equation}
0 = P_j(S_j - \Delta) = \dot P_j (S_j - \Delta) = \ddot P_j (S_j - \Delta) = P_j(S_j + \Delta) = \dot P_j (S_j + \Delta) = \ddot P_j(S_j + \Delta),
\end{equation}
and $P_j (S_j) = f_j$, $\dot P_j(S_j) = f_j'$ and $\ddot P_j(S_j) = f_j''$. By \cref{lemma-ee-interp-poly}, this polynomial exists. Using these polynomials, we define our objective function, $F_J : \mathbb{R} \to \mathbb{R}$ by
\begin{equation} \label{eqn-ee-objective}
    F_J(\theta) = 
    \begin{cases}
    P_j( \theta ) & \theta \in \left[S_j - \Delta, S_j + \Delta \right], ~j=0,\ldots,J \\
    0 & \text{otherwise}.
    \end{cases}
\end{equation}

This function has properties stated in \cref{prop-ee-properties}.
\begin{proposition} \label{prop-ee-properties}
    For any $J \in \mathbb{N}$, given a strictly increasing sequence $\{S_j : j \in \{0,1,...,J\} \}$, $\Delta > 0$ such that the intervals $\{ [S_j - \Delta, S_j + \Delta] : j \in \{0,1,...,J\} \}$ are disjoint. Let $\{(f_j, f_j', f_j'') \in \mathbb{R}^3 : j \in \{0,1,...,J\}\}$. Let $F_J: \mathbb{R} \to \mathbb{R}$ be defined as in \cref{eqn-ee-objective}. Then, 
    \begin{remunerate}
        \item For all $j \in \{0,1,...,J\}$, $F_J(S_j) = f_j, \dot F_J(S_j) = f_j', \ddot F_J(S_j) = f_j''$;
        \item For all $j \in \{0,1,...,J\}$, $F_J(S_j - \Delta) = \dot F_J(S_j - \Delta) = \ddot F_J(S_j - \Delta) = F_j(S_j + \Delta) = \dot F_j(S_j + \Delta) = \ddot F_j(S_j + \Delta) = 0$;
        \item $F_J$ satisfies \cref{as-bounded below,as-loc-lip-cont}.
    \end{remunerate}
    \end{proposition}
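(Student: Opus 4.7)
The plan is to dispatch claims (1) and (2) as near-immediate consequences of the construction of $P_j$ and $F_J$, and then establish claim (3) via a gluing argument at the boundaries $S_j \pm \Delta$ of the ``bump'' intervals.

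For (1) and (2) I would note that by \cref{eqn-ee-objective}, on $[S_j-\Delta, S_j+\Delta]$ we have $F_J \equiv P_j$, and \cref{lemma-ee-interp-poly} guarantees $(P_j(S_j),\dot P_j(S_j),\ddot P_j(S_j))=(f_j,f_j',f_j'')$ together with the vanishing of $P_j$ and its first two derivatives at $S_j\pm\Delta$. Since the intervals are disjoint, these values carry over unambiguously to $F_J$, and (1) and (2) follow at once.

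For the lower-bound assumption \cref{as-bounded below}, each $P_j$ is continuous on the compact interval $[S_j-\Delta, S_j+\Delta]$, hence attains a finite minimum $\mu_j$; outside the finite union of these $J+1$ intervals $F_J \equiv 0$. Taking $F_{l.b.} = \min\{0,\mu_0,\ldots,\mu_J\}$ gives a finite lower bound.

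The core of the proof is verifying \cref{as-loc-lip-cont}. Inside each open interval $(S_j-\Delta, S_j+\Delta)$, $F_J = P_j$ is infinitely differentiable; on the complement of the closed union of these intervals, $F_J \equiv 0$ is infinitely differentiable as well. At a boundary point $S_j\pm\Delta$, the one-sided values of $\dot P_j$ and $\ddot P_j$ from inside the interval are $0$ by (2), matching the vanishing derivatives from outside. Hence $\dot F_J$ extends continuously to all of $\mathbb{R}$, and $\ddot F_J$ exists and is continuous as well. For any compact $C\subset\mathbb{R}$, $C$ meets only finitely many of the intervals, on each of which $|\ddot P_j|$ attains a finite maximum by compactness, while $\ddot F_J$ vanishes outside; taking the maximum over these finitely many $j$ yields a uniform bound on $|\ddot F_J|$ over $C$, which delivers local Lipschitz continuity of $\dot F_J$.

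The main obstacle is the gluing at $S_j\pm\Delta$, but this was rendered routine by the foresight built into \cref{lemma-ee-interp-poly} of requiring $P_j$, $\dot P_j$, and $\ddot P_j$ to vanish simultaneously at the boundary. Absent the second-derivative condition, $\ddot F_J$ would have jump discontinuities at $S_j \pm \Delta$ and the uniform bound on compacts could fail; the Hermite-type interpolation encoded in the lemma is precisely calibrated to avoid this pathology.
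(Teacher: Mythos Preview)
Your proposal is correct and follows essentially the same approach as the paper: dispatch (1) and (2) by construction and \cref{lemma-ee-interp-poly}, obtain the lower bound by continuity on a compact support, and verify local Lipschitz continuity of $\dot F_J$ by gluing the polynomial pieces to the zero function at $S_j\pm\Delta$ using the vanishing of $P_j$, $\dot P_j$, $\ddot P_j$ there. The only cosmetic difference is that the paper verifies the Lipschitz condition at the boundary via a direct inequality $|\dot F_J(\theta)-\dot F_J(\psi)|=|\dot P_j(\psi)|\le L|\psi-(S_j-\Delta)|\le L|\psi-\theta|$, whereas you argue the slightly stronger statement that $F_J\in C^2(\mathbb{R})$ and then bound $|\ddot F_J|$ on compacts; both routes hinge on the same second-derivative matching built into the interpolation lemma.
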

    \begin{proof}
    First, properties 1 and 2 follow directly by construction of $F_J(\theta)$ and $P_j(\theta)$, and \cref{lemma-ee-interp-poly}. To check the other properties, we begin by checking the boundary points in $F_J$. By construction, $P_j$ are zero at these points and have one sided derivatives of value zero. As $F_J$ is zero everywhere else, then the objective is continuous and differentiable everywhere. We now verify that the objective is locally Lipschitz continuous. $P_j$ are polynomial functions which are twice continuously differentiable, which implies that they are locally Lipschitz continuous. The zero function is twice continuously differentiable. Hence, we need only check the local Lipschitz continuity of the derivative at the boundary points. 
    
    By assumption, the intervals $\{ [S_j - \Delta, S_j + \Delta] : j \in \{0,1,...,J\} \}$ are disjoint and $\Delta > 0$. Therefore, there exists an $\epsilon_1 > 0$ such that for any boundary point $S_{j} - \Delta$, $F_J(\theta) = 0$ for all $\theta \in (S_{j} - \Delta - \epsilon_1, S_{j} - \Delta]$. Now, let $\epsilon_2 \in (0, 2\Delta]$. Let $L$ be the local Lipschitz rank of $\dot P_j$ within $[S_j - \Delta, S_j - \Delta +\epsilon_2)$. Then, for any $\theta \in (S_j - \Delta - \epsilon_1, S_j - \Delta]$ and $\psi \in [S_j - \Delta, S_j - \Delta + \epsilon_2)$,
    \begin{equation}
    |\dot F_J(\theta) - \dot F_J(\psi)| = |\dot P_j(\psi)| \leq L \left|\psi - S_j + \Delta \right| \leq L |\psi - \theta|.
    \end{equation}
    The boundary point of $S_j + \Delta$ can be treated the same. Finally, $F_J$ is bounded from below since $F_J$ is continuous everywhere and only nonzero on a compact set.
\end{proof}

An example illustrating the first four components of \cref{eqn-ee-objective} is plotted in \cref{ee-example-function}.
\begin{figure}[H]
    \centering
    \input{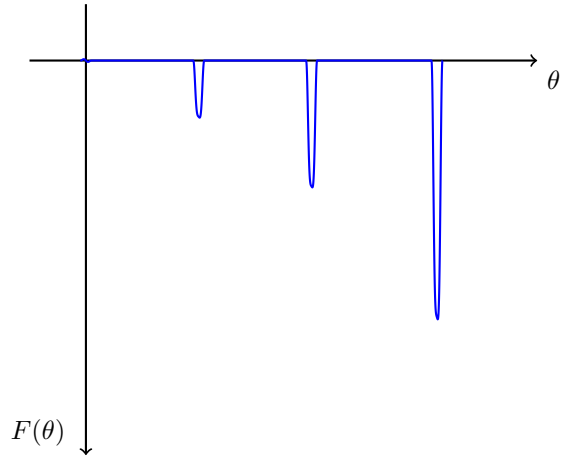}
    \caption{A plot of the first four components of \cref{eqn-ee-objective} for a specific choice of parameters. \label{ee-example-function}}
\end{figure}

We now show for any parametrization of the algorithms in \cref{table-multiple-objective-evaluation}, there exists an objective function through our construction such that the number of objective function evaluations is exponentially increasing between iterates. We go through the algorithms in \cref{table-multiple-objective-evaluation} in order, starting with Armijo Backtracking.

\subsection{Armijo's Backtracking Method} \label{subsec:armijo-backtracking}
Armijo backtracking method is a well-studied technique, that utilizes smoothness properties of the gradient to show that eventually a small step size must satisfy a descent condition derived from an upper bound model of the objective \cite{armijo1966minimization}. We present the pseudo-code in \cref{alg:armijo}.
\begin{algorithm}[H]
    \caption{Armijo's Backtracking Method}
    \label{alg:armijo}
    \begin{algorithmic}
        \Require $\alpha >0$, $\delta \in (0,1)$, $\rho \in (0,1)$, $\theta_0 \in \mathbb{R}^n$
        \For{$k = 0,1,2,...$}
        		$j = 0$
            \While{ $F(\theta_k - \alpha \delta^j \dot F(\theta_k) ) > F(\theta_k) - \rho\alpha \delta^j \inlinenorm{ \dot F(\theta_k)}_2^2$}
            		\State $j = j + 1$
            \EndWhile
            \State $\theta_{k+1} = \theta_k - \alpha \delta^j \dot F(\theta_k)$.
        \EndFor
    \end{algorithmic}
\end{algorithm}

We now show that there is an exponential increase in the number of objective function evaluations between accepted iterates up to $J$. 
\begin{proposition} \label{result-ee-armijo}
Let $\delta \in (0,1)$, $\alpha > 0$, $\rho \in (0,1)$, and $J \in \mathbb{N}$. Let $S_0 = 0$; for $j=1,\ldots,J$, let $S_j = S_{j-1} + \delta^{j - 2^{j-1}}$; and, for $j=0,\ldots,J$,
\begin{equation}
    ( f_j, f_j', f_j'' ) = 
    \begin{cases}
        \left( 0, -1/\alpha, 0 \right) & j = 0 \\
        \left( \frac{-\rho}{\alpha}\sum_{k=0}^{j-1} \delta^{2k+4 - 2^{k+2}}, -\frac{\delta^{j+2 - 2^{j+1}}}{\alpha}, 0 \right) & j = 1 \ldots, J
    \end{cases}.
\end{equation}
Let $\Delta = \frac{1-\delta}{2}$, and $F_J: \mathbb{R} \to \mathbb{R}$ be defined as in \cref{eqn-ee-objective}. Let $\theta_0 = 0$ and let $\lbrace \theta_1,\ldots,\theta_J \rbrace$ be generated by \cref{alg:armijo}. Then, the total number of objective function evaluations taken by \cref{alg:armijo} upon acceptance of iterate $\theta_j$ is $2^{j}$ for $j=1,\ldots,J$.
\end{proposition}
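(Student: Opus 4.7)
The plan is induction on $k$ to establish jointly that $\theta_k = S_k$ and that iteration $k$ of \cref{alg:armijo} performs exactly the intended number of backtracking tests; the total evaluation count will then follow by summation. For the base case, $\theta_0 = 0 = S_0$ by hypothesis. For the inductive step, suppose $\theta_k = S_k$. By \cref{prop-ee-properties}, $F_J(\theta_k) = f_k$ and $\dot F_J(\theta_k) = f_k'$, so the $j$-th trial point is
\begin{equation}
T_{k,j} \;=\; S_k - \alpha \delta^j f_k' \;=\; S_k + \delta^{j + k + 2 - 2^{k+1}}
\end{equation}
(the exponent simplifying to $j$ when $k = 0$). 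Comparing with the prescribed increment $S_{k+1} - S_k = \delta^{k + 1 - 2^k}$ shows $T_{k,j} = S_{k+1}$ precisely when $j = 2^k - 1$.

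The technical heart will be to show that for every $j$ strictly smaller than $2^k - 1$, the trial point $T_{k,j}$ lies in the region where $F_J \equiv 0$, so that the Armijo inequality reduces to
\begin{equation}
0 \;\leq\; f_k - \rho \alpha \delta^j (f_k')^2,
\end{equation}
whose right-hand side is strictly negative since $f_k \leq 0$ and $(f_k')^2 > 0$; this will force another backtracking step. The essential obstacle here is to verify that $T_{k,j}$ avoids \emph{every} bump $[S_m - \Delta, S_m + \Delta]$ for $m = 0, \ldots, J$, not merely the one at $S_{k+1}$. Because the successive increments $S_{m+1} - S_m = \delta^{m + 1 - 2^m}$ grow super-geometrically in $m$, a monotonicity argument should show that the bump nearest to $T_{k,j}$ is always the one at $S_{k+1}$; combined with $\Delta = (1-\delta)/2$ and $\delta \in (0,1)$, this yields $|T_{k,j} - S_{k+1}| > \Delta$ for all $j < 2^k - 1$. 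The case $k = 0$ is a degenerate special case where the loop exits immediately at $j = 0$.

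At $j = 2^k - 1$, the trial point equals $S_{k+1}$, so $F_J(T_{k,j}) = f_{k+1}$ and the Armijo inequality becomes $f_{k+1} - f_k \leq -\rho \alpha \delta^{2^k - 1} (f_k')^2$. Substituting the closed forms, with $f_{k+1} - f_k = -(\rho/\alpha)\,\delta^{2k + 4 - 2^{k+2}}$ (the single new term in the telescoping sum defining $f_{k+1}$) and $(f_k')^2 = \delta^{2k + 4 - 2^{k+2}}/\alpha^2$, the inequality reduces to a comparison of exponents of $\delta$ that, after taking logarithms in base $\delta \in (0,1)$ (which flips the inequality), becomes equivalent to $2^k \geq 1$, a trivial fact. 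Hence the backtracking loop exits at $j = 2^k - 1$, yielding $\theta_{k+1} = S_{k+1}$ and closing the induction. Tallying the tests, iteration $k$ performs $2^k$ evaluations; summing over iterations $0, \ldots, j-1$, together with the accounting of the reference value at $\theta_0$, produces the claimed total of $2^j$ evaluations upon acceptance of $\theta_j$.
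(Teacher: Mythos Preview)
Your overall strategy matches the paper's: induct on $k$ to show $\theta_k = S_k$, verify that the rejected trial points fall in the zero region of $F_J$, and check the Armijo inequality holds with equality (or better) at the final trial. The counting argument is also correct.

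There is, however, a concrete error in your plan for handling the rejected trials. You assert that ``the bump nearest to $T_{k,j}$ is always the one at $S_{k+1}$,'' intending to conclude from $|T_{k,j} - S_{k+1}| > \Delta$ that $T_{k,j}$ lies in the zero region. This nearest-bump claim is false. Take $k=1$ and $\delta = 1/10$: then $S_2 = 2$, $S_3 = 12$, and the largest trial point is $T_{1,0} = S_1 + \delta^{-1} = 11$, which is at distance $9$ from $S_2$ but only distance $1$ from $S_3$. The nearest bump is $S_{k+2}$, not $S_{k+1}$, so knowing only that $|T_{k,j} - S_{k+1}| > \Delta$ does not rule out $T_{k,j} \in [S_{k+2} - \Delta, S_{k+2} + \Delta]$.

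The paper handles this by sandwiching: it shows that all rejected trial points lie in the gap $[S_{k+1} + \Delta,\, S_{k+2} - \Delta]$, checking the two endpoints $\ell = 2^k - 2$ and $\ell = 0$ separately (monotonicity in $\ell$ handles the rest). The upper bound uses $T_{k,0} = S_k + \delta^{k+2-2^{k+1}} = S_{k+2} - \delta^{k+1-2^k}$ together with $\delta^{k+1-2^k} \geq 1 > \Delta$. Your argument needs this second inequality; once you add it, the proof goes through exactly as you outlined.
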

\begin{proof}
First, note that $S_j$ is a strictly increasing finite sequence, and that for all $j \in \mathbb{N}$, $\delta^{j-2^{j-1}} > 1-\delta > 0$ so that our constructed function satisfies the assumptions in \cref{prop-ee-properties}. Now to prove the statement in the proposition we proceed by induction. By construction, $\theta_0 = S_0$ so for the base case. Since $F(\theta_0) = F(S_0) = 0$, 
\begin{equation}
F(\theta_0 - \alpha \dot F(\theta_0)) = F(\theta_0 + 1) = F(S_1) = - \frac{\rho}{\alpha} = F(\theta_0) - \alpha \rho \dot F(\theta_0)^2.
\end{equation}
Hence, the algorithm accepts $\theta_1 = S_1$. Moreover, the algorithm computed $F(\theta_0)$ and $F(S_1)$ for line search, totaling to $2 = 2^{1}$ objective evaluations. If $J = 1$ we are done. Otherwise, we continue.

Suppose for $j \in \lbrace 1,\ldots, J-1 \rbrace$, $\theta_j = S_j$ and the total number of objective evaluations taken by the algorithm upon acceptance of $\theta_j$ is $2^{j}$. We now generalize by showing that
\begin{equation} 
\theta_j - \alpha \delta^{2^j - 1} \dot F(\theta_j) = S_j - \alpha \delta^{2^j - 1} \dot F(S_j) = S_j + \delta^{2^j - 1}\delta^{j+2-2^{j+1}} = S_{j} + \delta^{j +1 - 2^{j}} = S_{j+1}
\end{equation}
is the accepted iterate (i.e., $\theta_{j+1} = S_{j+1}$). Hence, we must show that $F(\theta_{j} - \alpha \delta^{\ell} \dot F(\theta_j) ) > F(\theta_j) - \rho \alpha \delta^{\ell} \dot F(\theta_j)^2$ for all $\ell \in \lbrace 0,1,\ldots,2^{j}-2 \rbrace$, and then the opposite holds at $\ell = 2^{j} - 1$. To show this, we first verify $\theta_j - \alpha \delta^{\ell} \dot F(\theta_j) \in [ S_{j+1} + \frac{1-\delta}{2}, S_{j+2} - \frac{1-\delta}{2} ]$ for $\ell \in \lbrace 0,1,\ldots,2^{j}-2 \rbrace$, which implies $F(\theta_j - \alpha \delta^{\ell} \dot F(\theta_j)) = 0$. As the sequence is decreasing with increasing $\ell$, it is enough to verify that the two end points are in the interval. For the smallest term (i.e., $\ell = 2^j - 2$),
\begin{equation}
\theta_j - \alpha \delta^{2^j - 2} \dot F(\theta_j) = S_j + \delta^{2^j - 2 + j + 2 - 2^{j+1}} = S_j + \delta^{j - 2^{j}} > S_{j} + \delta^{j +1 - 2^{j}}  + \frac{1-\delta}{2} = S_{j+1} + \frac{1-\delta}{2},
\end{equation}
where the inequality follows from $\delta^{j - 2^j} > 1/2$ for all $j \in \mathbb{N} \cup \lbrace 0 \rbrace$, which provides the necessary inequality when rearranged. For the largest term (i.e., $\ell = 0$), since $\delta^{j+1-2^j}> 1/2 > \frac{1-\delta}{2}$ for all $j \in \mathbb{N} \cup \{0\}$,
\begin{equation}
\theta_j - \alpha \dot F(\theta_j) =S_j + \delta^{j+2-2^{j+1}} = S_{j+2} + \delta^{j+2-2^{j+1}} - \delta^{j+1 - 2^{j}} - \delta^{j+2 - 2^{j+1}} \le S_{j+2} - \frac{1-\delta}{2}.
\end{equation}
Therefore, since for all $j \in \lbrace 1,\ldots, J-1 \rbrace$, $F(\theta_j) = P(S_j) < 0$, $\dot F(\theta_j) = \dot P(\theta_j) < 0$, and $\delta, \alpha, \rho > 0$ we have for all $\ell \in \{0,1,...,2^j-2\}$, $F(\theta_j - \alpha \delta^{\ell} \dot F(\theta_j)) = 0 > F(\theta_j) - \rho\alpha\delta^\ell\dot{F}(\theta_j)^2$. Lastly, for $\ell = 2^j-1$ (i.e., $\theta_j - \alpha \delta^{\ell} \dot F(\theta_j) = S_{j+1}$), since $\delta < 1$ we obtain that $\delta^{2j+4-2^{j+2}} \geq \delta^{2^j-1} \delta^{2j+4-2^{j+2}}$ for all $j \in \mathbb{N} \cup \{0\}$, which implies by adding the extra terms and multiplying by $-\rho/\alpha$ that
\begin{equation}
    F(S_{j+1}) = -\frac{\rho}{\alpha}\sum_{k=0}^j \delta^{2k+4-2^{k+2}} \leq -\frac{\rho}{\alpha}\sum_{k=0}^{j-1} \delta^{2k+4-2^{k+2}} - \frac{\rho}{\alpha} \delta^{2^j-1} \delta^{2j+4-2^{j+2}} = F(\theta_j) - \rho\alpha\delta^{2^j-1}\dot{F}(\theta_j)^2.
\end{equation}
Therefore, $2^j$ objective function evaluations were required to accept $\theta_{j+1}$ (excluding $F(\theta_j)$ as that was computed by the previous iteration). By the inductive hypothesis, $2^j$ total objective evaluations are required for $\theta_j$, thus the total number of objective evaluations taken by the algorithm upon acceptance of $\theta_{j+1}$ is $2^j + 2^j = 2^{j+1}$.
\end{proof}

\subsection{Newton's Method with Cubic Regularization} \label{subsec:cubic-newton-method}
In this section, we consider an example for Cubic Regularized Newton's Method from \cite{nesterov2006cubic} (see \cref{alg:crn}). At each iteration of this algorithm, a quadratic model of the objective plus a cubic penalty term is minimized, with the intuition being that a full unpenalized Newton's step is not suitable far away from stationary points and might not guarantee descent. The regularization parameter is selected by line search.
\begin{algorithm}[H]
    \caption{Cubic Regularized Newton's Method}
    \label{alg:crn}
    \begin{algorithmic}
        \Require $L_0 > 0$, $\delta_1 > 1$, $\theta_0 \in \mathbb{R}^n$
        \For{$k = 0,1,...$}
            \State $\ell \leftarrow 0$
            \State $M^k_\ell \leftarrow L_0$ 
            \While{$\text{true}$}
                \State $U_k(\psi; M^k_\ell) \leftarrow \dot F(\theta_k)^\intercal (\psi - \theta_k) + .5 (\psi - \theta_k)^\intercal \ddot F(\theta_k) (\psi - \theta_k) + \frac{1}{6}M^k_\ell \inlinenorm{\psi - \theta_k}_2^3$
                \State $\psi^k_\ell \leftarrow \arg\min_{\psi \in \mathbb{R}^n} U_k(\psi; M^k_\ell)$
                \If{$F(\psi^k_\ell) \leq F(\theta_k) + U_k(\psi^k_\ell; M^k_\ell)$}
                    \State $\textbf{exit loop}$
                \EndIf
                \State $M^{\ell+1}_k \leftarrow \delta_1 M^k_\ell$
                \State $\ell \leftarrow \ell + 1$
            \EndWhile
            \State $\theta_{k+1} \leftarrow \psi^k_\ell$
        \EndFor
    \end{algorithmic}
\end{algorithm}

We now provide an objective function such that there is an exponential increase in objective function evaluations between accepted iterates up to the $J$th acceptance.

\begin{proposition}
    Let $L_0 > 0$, $\delta_1 \in (1,\infty)$, and $J \in \mathbb{N}$. Set $\delta = \frac{1}{\sqrt{\delta_1}}$, and $S_0 = 0$; for $j = 1,...,J$, let $S_j = S_{j-1} + \delta^{j-2^{j-1}}$; and, for $j = 0,\ldots, J$,
    \begin{equation}
        (f_j, f_j', f_j'') = 
        \begin{cases}
            \left(0, -L_0/2, 0 \right) & j = 0 \\
            \left( -\frac{L_0}{3} \sum_{k=0}^{j-1} (\delta^{k+2-2^{k+1}})^3, - \frac{L_0}{2}\left(\delta^{j+2-2^{j+1}}\right)^2, 0 \right) & $j = 1,\ldots,J$
        \end{cases}.
    \end{equation}
    Let $\Delta = \frac{1-\delta}{2}$, and $F_J : \mathbb{R}\to\mathbb{R}$ be defined as in \cref{eqn-ee-objective}. Let $\theta_0 = 0$ and let $\{\theta_1,...,\theta_J\}$ be generated by \cref{alg:crn}. Then, the total number of objective function evaluations taken by \cref{alg:crn} upon acceptance of iterate $\theta_j$ is $2^j$ for $j = 1,...,J$.
\end{proposition}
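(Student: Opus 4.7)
The plan is to mirror the inductive argument of \cref{result-ee-armijo}. I will show that the accepted outer iterates satisfy $\theta_j = S_j$ for $j = 0, 1, \ldots, J$, and that the inner while-loop at outer step $j$ performs exactly $2^j$ trial evaluations of $F_J$ (indexed by $\ell = 0, \ldots, 2^j - 1$), rejecting all but the last; combined with the single initial evaluation $F_J(\theta_0)$, this yields a cumulative count of $2^j$ upon acceptance of $\theta_j$.

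First, I would check that the hypotheses of \cref{prop-ee-properties} hold. Since $\delta \in (0,1)$ and $j - 2^{j-1} \le 0$ for $j \ge 1$, we have $S_j - S_{j-1} = \delta^{j-2^{j-1}} \ge 1 > 2\Delta = 1-\delta$, so the intervals $[S_j - \Delta, S_j + \Delta]$ are pairwise disjoint. By \cref{prop-ee-properties} this gives $F_J(S_j) = f_j$, $\dot F_J(S_j) = f_j'$, $\ddot F_J(S_j) = 0$, and $F_J \equiv 0$ off the union of these intervals.

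Next, I would compute the cubic-model minimizer at $\theta_j = S_j$. Because $\ddot F_J(S_j) = 0$ and $f_j' < 0$, stationarity of $U_j(\psi; M^j_\ell)$ gives $\psi^j_\ell - S_j = \sqrt{2|f_j'|/M^j_\ell}$; substituting $|f_j'| = \tfrac{L_0}{2}\delta^{2(j+2-2^{j+1})}$ and $M^j_\ell = \delta_1^\ell L_0 = \delta^{-2\ell} L_0$ simplifies this to
\[
\psi^j_\ell - S_j = \delta^{\,j+2-2^{j+1}+\ell}.
\]
In particular $\psi^j_{2^j - 1} - S_j = \delta^{\,j+1-2^j} = S_{j+1} - S_j$, so the last trial lands exactly on $S_{j+1}$. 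For $\ell \in \{0, \ldots, 2^j - 2\}$, the estimate $\delta^{j-2^j}>1/2$ together with $\delta^{j+1-2^j} \ge 1 > \Delta$ places every trial $\psi^j_\ell$ strictly inside $(S_{j+1} + \Delta,\, S_{j+2} - \Delta)$, where $F_J \equiv 0$. Using $(\psi^j_\ell - S_j)^2 = 2|f_j'|/M^j_\ell$ at the minimizer also yields the closed form $U_j(\psi^j_\ell; M^j_\ell) = -\tfrac{2|f_j'|}{3}(\psi^j_\ell - S_j) = -\tfrac{L_0}{3}\delta^{\,3(j+2-2^{j+1})+\ell}$.

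Finally, the acceptance and rejection checks reduce to sign comparisons. For $\ell \in \{0, \ldots, 2^j - 2\}$, both $f_j \le 0$ and $U_j(\psi^j_\ell; M^j_\ell) < 0$, so $F_J(\psi^j_\ell) = 0 > f_j + U_j(\psi^j_\ell; M^j_\ell)$ and the trial is rejected. At $\ell = 2^j - 1$, the acceptance condition $f_{j+1} - f_j \le U_j(S_{j+1}; M^j_{2^j - 1})$ reduces, after cancelling the common factor $-L_0/3$, to $\delta^{3(j+2-2^{j+1})} \ge \delta^{3(j+2-2^{j+1}) + 2^j - 1}$, i.e., $1 \ge \delta^{2^j - 1}$, which holds since $\delta \in (0,1)$ and $2^j - 1 \ge 0$. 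The principal obstacle is keeping the exponent bookkeeping consistent across $|f_j'|$, $(\psi^j_\ell - S_j)$ and $U_j$; a single off-by-one slip would break both the landing argument placing $\psi^j_\ell$ in the zero region of $F_J$ and the final inequality $1 \ge \delta^{2^j - 1}$. The counting then parallels \cref{result-ee-armijo}: outer iteration $j$ contributes $2^j$ new evaluations (with $F_J(\theta_j)$ reused from the prior acceptance when $j \ge 1$), so by induction the cumulative count to accept $\theta_j$ is $1 + \sum_{k=0}^{j-1} 2^k = 2^j$.
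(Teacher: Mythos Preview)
Your proposal is correct and follows essentially the same approach as the paper's proof: both proceed by induction, compute the cubic-model minimizer explicitly using $\ddot F_J(S_j)=0$, show that the rejected trials $\ell=0,\ldots,2^j-2$ land in the zero region of $F_J$ via the same endpoint estimates, verify acceptance at $\ell=2^j-1$ by reducing to $1\ge\delta^{2^j-1}$, and tally evaluations identically. Your presentation is slightly more streamlined in that you derive the closed form $U_j(\psi^j_\ell;M^j_\ell)=-\tfrac{L_0}{3}\delta^{3(j+2-2^{j+1})+\ell}$ and write the cumulative count as $1+\sum_{k=0}^{j-1}2^k$, but the substance is the same.
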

\begin{proof}
    We first provide a global minimizer of $U_j(\psi; M)$ for any penalty parameter, $M > 0$, in the special case that $\theta_j$ satisfies $\ddot F(\theta_j) = 0$ and $\dot F(\theta_j) \not= 0$. Specifically, $U_j$, $\dot U_j$, and $\ddot U_j$ are
    \begin{equation}
        \begin{aligned}
            &U_j(\psi; M) = \dot F(\theta_j)^\intercal (\psi - \theta_j) +  
            \begin{cases}
                \frac{M}{6}(\psi-\theta_j)^3 & (\psi-\theta_j) \geq 0 \\
                -\frac{M}{6}(\psi-\theta_j)^3 & (\psi-\theta_j) < 0
            \end{cases},\\
            \text{\quad}
            &\dot U_j(\psi; M) = \dot F(\theta_j) + 
            \begin{cases}
                \frac{M}{2} (\psi-\theta_j)^2 & (\psi-\theta_j) \geq 0 \\
                -\frac{M}{2} (\psi-\theta_j)^2 & (\psi-\theta_j) < 0
            \end{cases},\\
            &\ddot U_j(\psi; M) = 
            \begin{cases}
                M(\psi-\theta_j) & (\psi-\theta_j) \geq 0\\
                -M(\psi-\theta_j) & (\psi-\theta_j) < 0
            \end{cases}.
        \end{aligned}
    \end{equation}
    From these equations, one can verify by checking first and second order sufficient conditions that two possible minimizers are $\theta_j + \sqrt{-2\dot F(\theta_j)/M}$ and $\theta_j - \sqrt{2\dot F(\theta_j)/M}$, only one of which is an element of $\mathbb{R}$ depending on the sign of $\dot F(\theta_j)$. 
    
    Next, the function $F_J$ has all the properties in \cref{prop-ee-properties} (see the proof of \cref{result-ee-armijo}). We now proceed by induction to prove the statement in the current proposition. For the base case, by construction $\theta_0 = S_0$, and by \cref{alg:acr}, $M^0_0 = L_0$. By the above discussion, this implies the first trial point is $\psi^0_0 = \theta_0 + \sqrt{-2\dot F(S_0)/L_0} = S_0 + 1 = S_1$. Therefore, 
    \begin{equation}
        F(S_1) = \frac{-L_0}{3} = F(\theta_0) - \sqrt{2/L_0}(2/3)(-\dot F(S_0))^{3/2} = F(\theta_0) + U_0(\psi_0^0; M_0^0).
    \end{equation}
    This implies that $\theta_1 = \psi^0_0 = S_1$. Upon acceptance of $S_1$, the algorithm took two objective function evaluation ($F(\theta_0)$ and $F(S_1)$). If $J=1$ we are done, otherwise suppose that for $j \in \{1,...,J-1\}$, $\theta_j = S_j$ and $2^j$ total objective function evaluations have been taken by the algorithm upon acceptance of $\theta_j$. Using the identity $\delta = \sqrt{1/\delta_1}$, we generalize this to $j+1$ by showing that
    \begin{equation}
        \theta_j + \sqrt{-2\dot F(\theta_j)/(\delta_1^{2^j-1}L_0)} = S_j + \sqrt{1/\delta_1^{2^j-1}} \delta^{j+2-2^{j+1}} = S_j + \delta^{2^j-1}\delta^{j+2-2^{j+1}} = S_{j+1}
    \end{equation}
    is the accepted iterate (i.e., $\theta_{j+1} = S_{j+1}$). We first show that for all $\ell \in \{0,...,2^{j}-2\}$ that $\psi^j_\ell \in [S_{j+1} + (1-\delta)/2, S_{j+2}-(1-\delta)/2]$, which will imply that $\{\psi^j_\ell\}_{\ell = 0}^{2^j-2}$ are all rejected. To prove this, we only check the cases $\ell = 0$ and $\ell = 2^{j}-2$, as the optimal solution to the subproblem with penalty parameter $\delta^\ell_1 L_0$ is $\theta_j + \sqrt{-2\dot F(\theta_j)/(\delta^\ell_1 L_0)} = \theta_j + \delta^\ell \sqrt{-2\dot F(\theta_j)/L_0}$ is a decreasing quantity. Now taking $\ell = 2^j-2$ (i.e., we are in the smallest case), using the inductive hypothesis and $\delta < 1$
    \begin{equation}
        \theta_j + \delta^{2^j-2} \sqrt{-2\dot F(\theta_j)/L_0} = S_j + \delta^{2^j-2}\delta^{j+2-2^{j+1}} \geq S_{j+1} + \frac{1-\delta}{2}.
    \end{equation}
    In the case $\ell = 0$ (i.e., the largest case), using the inductive hypothesis and $\delta < 1$
    \begin{equation}
        \theta_j + \sqrt{-2\dot F(\theta_j)/L_0}  = S_j + \delta^{j+2-2^{j+1}} = S_{j+2} - \delta^{j+1-2^j} \leq S_{j+2} - \frac{1-\delta}{2}.
    \end{equation}
    This proves that for all $\ell \in \{0,...,2^{j}-2\}$ that $\psi^{j}_\ell \in [S_{j+1} + (1-\delta)/2, S_{j+2}-(1-\delta)/2]$, and thus, $F(\psi^j_\ell) = 0$ by construction. To show that this results in rejection of all trial iterates $\{\psi^j_\ell\}_{\ell = 0}^{2^j-2}$, note that
    \begin{equation}
        F(\theta_j)+U_j(\psi^j_\ell; \delta^\ell_1 L_0) = F(\theta_j) - \sqrt{2/(\delta^\ell_1 L_0)} (2/3) (-\dot F(\theta_j))^{3/2}.
    \end{equation}
    By inductive hypothesis, $F(\theta_j) = F(S_j) < 0$, which by using the above equation implies that for all $\ell \in \{0,...,2^{j}-2\}$, $F(\psi^j_\ell) = 0 > F(\theta_j)+U_j(\psi_\ell^j; \delta^\ell_1 L_0)$. Lastly, we show that $F(\psi_{2^j-1}^j) = F(S_{j+1})$ is accepted. Since $\delta < 1$, $(\delta^{j+2-2^{j+1}})^3 \geq \delta^{2^j-1}(\delta^{j+2-2^{j+1}})^3$ for all $j \in \mathbb{N}\cup\{0\}$, which by adding the extra terms in the sum for $F(S_{j+1})$ and multiplying by $-L_0/3$ implies
    \begin{equation}
        \begin{aligned}
        F(S_{j+1}) = -\frac{L_0}{3} \sum_{k=0}^{j} (\delta^{k+2-2^{k+1}})^3 &\leq -\frac{L_0}{3} \sum_{k=0}^{j-1} (\delta^{k+2-2^{k+1}})^3 - \delta^{2^j-1}\frac{L_0}{3} (\delta^{j+2-2^{j+1}})^3 \\
        &= F(S_j) - \delta^{2^j-1} \sqrt{\frac{2}{L_0}} \frac{2}{3} (-\dot F(\theta_j))^{3/2}.
        \end{aligned}
    \end{equation} 
    Therefore, $S_{j+1}$ is accepted. To finish the proof, everytime the algorithm computes a trial iterate, $\psi^{j}_\ell$, $F(\psi^{j}_\ell)$ is computed once to check satisfaction of the second order cubic regularized upper bound model. Therefore, to accept $\theta_{j+1}$ it takes $2^j$ objective function evaluations (excluding $F(\theta_j)$ as that was computed in the previous iteration). By the inductive hypothesis, the algorithm already took $2^{j}$ for $\theta_j$, so the total is $2^{j+1}$.
\end{proof}

\subsection{Adaptive Cubic Regularization} \label{subsec:adaptive-cubic-reg}
The Adaptive Cubic Regularization method from \cite{cartis2011cubic1} is shown in \Cref{alg:acr}. This method is similar to Cubic Regularized Newton's method \cite{nesterov2006cubic}, except that instead of an explicit line search to set the penalty parameter and propose iterates, this method utilizes a model suitability criterion similar to trust region algorithms.

We now provide an example in \cref{prop:acr-example} such that \cref{alg:acr} requires an exponential increase in the number of objective function evaluations between accepted iterates up until the $J$th acceptance.
\bigskip
\begin{breakablealgorithm}
    \caption{Adaptive Cubic Regularization method}
    \label{alg:acr}
    \begin{algorithmic}[1]
        \Require $\theta_0 \in \mathbb{R}^n$, $\delta_1, \delta_2 \in \mathbb{R}$ such that $\delta_2 \geq \delta_1 > 1$, $\eta_1, \eta_2 \in \mathbb{R}$ such that $1 > \eta_2 \geq \eta_1 > 0$, $\sigma_0 > 0$.
        \Require $B_0 \in \mathbb{R}^{n \times n}$ and $\mathrm{UpdateHessianApproximation()}$

        \For{$k = 0,1,...$}
            \State $\ell \leftarrow 0$
            \State $\sigma_\ell^k \leftarrow \sigma_k$
            \While{$\text{true}$}
                \State $U_k(\gamma; \sigma_\ell^k) \leftarrow F(\theta_k) + \gamma^\intercal \dot F(\theta_k) + .5 \gamma^\intercal B_k \gamma^\intercal + \frac{1}{3}\sigma_\ell^k \inlinenorm{\gamma}_2^3$
                \State $m'_k \leftarrow \arg\min_{m \in \mathbb{R}_{+}} U_k(-m \dot F(\theta_k))$
                \State $\gamma'_k \leftarrow -m_k' \dot F(\theta_k)$
                \State \text{Compute $\gamma_\ell^k$ such that $U_k(\gamma_\ell^k; \sigma_\ell^k) \le U_k(\gamma'_k; \sigma_\ell^k)$}
                \State $\rho_\ell^k \leftarrow \frac{F(\theta_k) - F(\theta_k + \gamma_\ell^k)}{F(\theta_k) - U_k(\gamma_\ell^k; \sigma_\ell^k)}$
                \If{$\rho_\ell^k \geq \eta_1$}
                    \State $\gamma_k \leftarrow \gamma_\ell^k$
                    \State $\textbf{exit loop}$
                \EndIf
                \State $\sigma_{\ell+1}^{k} \in [\delta_1\sigma_\ell^k, \delta_2 \sigma_\ell^k]$
                \State $\ell \leftarrow \ell + 1$
            \EndWhile
        \State $\theta_{k+1} = \theta_{k} + \gamma_k$
        \If{$\rho_k > \eta_2$}
            \State $\sigma_{k+1} \in [0,\sigma_\ell^k]$
        \ElsIf{$\eta_1 \leq \rho_k \leq \eta_2$}
            \State $\sigma_{k+1} \in [\sigma_\ell^k, \delta_1 \sigma_\ell^k]$
        \EndIf
        \State $B_{k+1} \leftarrow \mathrm{UpdateHessianApproximation()}$
        \EndFor
    \end{algorithmic}
\end{breakablealgorithm}
\bigskip

\begin{proposition} \label{prop:acr-example}
    Let $\delta_1, \delta_2 \in \mathbb{R}$ such that $\delta_2 \geq \delta_1 > 1$, $\eta_1, \eta_2 \in \mathbb{R}$ such that $1 > \eta_2 \geq \eta_1 > 0$, $\sigma_0 > 0$, and $J \in \mathbb{N}$. Define $\delta = \frac{1}{\delta_2^{1/2}}$ and $S_0 = 0$; for $j = 1,\ldots, J$, let $S_j = S_{j-1} + \delta^{j - 2^{j-1}}$; and, for $j = 0,\ldots,J$ 
    \begin{equation}
        ( f_j, f_j', f_j'' ) = 
        \begin{cases}
            \left(0, -\sigma_0, 0  \right) & j = 0\\
            \left( -\frac{2(\eta_2+1)\sigma_0}{3} \sum_{k=0}^{j-1} \left( \delta^{k+2 - 2^{k+1}} \right)^{3}, -\sigma_0\left( \delta^{j+2 - 2^{j+1}} \right)^2, 0 \right) & j = 1,\ldots,J
        \end{cases}.
    \end{equation}
    Let $\Delta = \frac{1-\delta}{2}$, and $F_J: \mathbb{R} \to \mathbb{R}$ be defined as in \cref{eqn-ee-objective}, and define $B_j = \ddot F(\theta_j)$ (i.e., use the true Hessian). Let $\theta_0 = 0$ and let $\lbrace \theta_1,\ldots,\theta_J \rbrace$ be generated by \cref{alg:acr}. Then, the total number of objective function evaluations taken by \cref{alg:acr} upon acceptance of iterate $\theta_j$ is $2^{j}$ for $j=1,\ldots,J$.
\end{proposition}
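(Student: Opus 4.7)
My plan is to adapt the inductive structure used in \cref{subsec:cubic-newton-method} for Cubic Regularized Newton, leveraging the fact that our choice $f_j'' = 0$ forces $B_j = \ddot F(S_j) = 0$, so at each outer iteration of \cref{alg:acr} the subproblem
\[ U_k(\gamma; \sigma_\ell^k) - F(\theta_k) = \gamma\,\dot F(\theta_k) + \tfrac{1}{3}\sigma_\ell^k |\gamma|^3 \]
reduces to a pure scalar cubic in $\gamma$. In one dimension the Cauchy-point condition $U_k(\gamma_\ell^k; \sigma_\ell^k) \leq U_k(\gamma_k'; \sigma_\ell^k)$ is equivalent to unconstrained minimization of $U_k$, so I may take $\gamma_\ell^k = \sqrt{-\dot F(\theta_k)/\sigma_\ell^k}$ whenever $\dot F(\theta_k) < 0$. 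I will choose the adaptive update $\sigma_{\ell+1}^k = \delta_2\,\sigma_\ell^k$ inside each outer iteration (yielding $\sigma_\ell^k = \delta_2^\ell \sigma_0$) and the very-successful-branch update $\sigma_{k+1} = \sigma_0$ between iterations, both permitted by the algorithm. Combined with $\delta = 1/\sqrt{\delta_2}$, this reproduces exactly the geometric trial-point spacing $\theta_j + \gamma_\ell^k = S_j + \delta^{\ell + j + 2 - 2^{j+1}}$ used in the Cubic Newton proof.

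I then proceed by induction on the iterate index $j$. For the base case, $\theta_0 = S_0$ and the first trial is $\gamma_0^0 = \sqrt{\sigma_0/\sigma_0} = 1 = S_1 - S_0$; a direct computation of $\rho_0^0$ yields $\eta_2 + 1 > \eta_2$, so $\theta_1 = S_1$ is accepted, $\sigma_1$ is reset to $\sigma_0$, and exactly $2 = 2^1$ objective evaluations have been spent (namely $F(S_0)$ and $F(S_1)$). For the inductive step with $\theta_j = S_j$ and $2^j$ evaluations so far, I must verify that the trials $\ell = 0, \ldots, 2^j - 2$ are all rejected and that the trial $\ell = 2^j - 1$, which satisfies $\theta_j + \delta^{2^j - 1}\delta^{j+2-2^{j+1}} = S_{j+1}$, is accepted. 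Since $\ell \mapsto \gamma_\ell^k$ is strictly decreasing, it suffices to check the endpoints $\ell = 0$ and $\ell = 2^j - 2$ for the rejected trials; the inequalities $\delta^{j-2^j}(1-\delta) > (1-\delta)/2$ and $\delta^{j+1-2^j} > (1-\delta)/2$, which hold since the exponents are nonpositive when $j \geq 1$, place $\theta_j + \gamma_\ell^k$ in the flat interval $[S_{j+1}+\Delta, S_{j+2}-\Delta]$. By \cref{prop-ee-properties}, $F(\theta_j + \gamma_\ell^k) = 0$ and
\[ \rho_\ell^k = \frac{F(S_j)}{\tfrac{2}{3}\,\sigma_\ell^k(\gamma_\ell^k)^3} < 0 < \eta_1, \]
forcing rejection of every such trial.

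The main obstacle is confirming simultaneous acceptance \emph{and} reset at $\ell = 2^j - 1$. Telescoping the definition of $f_{j+1}$ gives $F(S_{j+1}) - F(S_j) = -\tfrac{2(\eta_2+1)\sigma_0}{3}\bigl(\delta^{j+2-2^{j+1}}\bigr)^3$, and a direct simplification using $\delta_2 = \delta^{-2}$ yields
\[ \rho_{2^j-1}^{k} = (\eta_2+1)\,\delta^{\,1-2^j} \geq \eta_2 + 1 > \eta_2. \]
This explains the $(\eta_2 + 1)$ factor in the definition of $f_j$: it is precisely what is needed to guarantee both $\rho \geq \eta_1$ (acceptance of $\theta_{j+1} = S_{j+1}$) and $\rho > \eta_2$ (entry into the very successful branch, permitting the reset $\sigma_{k+1} = \sigma_0$ that keeps the geometric trial-point schedule intact across outer iterations). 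Once acceptance is established, the $2^j$ trial evaluations of $F$ during outer iteration $k = j$ (excluding $F(\theta_j)$, which was already counted at the previous acceptance) together with the $2^j$ evaluations inherited from the induction hypothesis give the total count $2^{j+1}$, completing the induction.
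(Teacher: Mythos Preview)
Your proposal is correct and follows essentially the same approach as the paper's proof: the same inductive hypothesis (including $\sigma_j=\sigma_0$), the same choice $\sigma_{\ell+1}^k=\delta_2\sigma_\ell^k$ inside the inner loop, the same endpoint check via monotonicity of $\gamma_\ell^k$ to place rejected trials in the flat region, and the same acceptance computation $\rho_{2^j-1}^k=(\eta_2+1)\delta^{1-2^j}>\eta_2$ justifying the reset. If anything, you are slightly more explicit than the paper in justifying why the global minimizer of $U_k$ satisfies the Cauchy-point constraint in one dimension and in writing out the rejected-trial ratio $\rho_\ell^k=F(S_j)/\bigl(\tfrac{2}{3}\sigma_\ell^k(\gamma_\ell^k)^3\bigr)<0$.
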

\begin{proof}
    First, the function $F_J$ has all the properties in \cref{prop-ee-properties}, as our components satisfy all assumptions. Now, before proceeding to the main proof, we provide a global minimizer of $U_j(\gamma; \sigma)$ for any $\sigma > 0$, in the special case that $\theta_j$ satisfies $B_j = \ddot F(\theta_j) = 0$ and $\dot F(\theta_j) \not= 0$. Checking first and second order sufficient conditions, we can rewrite $U_j$, and derive $\dot U_j, \ddot U_j$ as 
    \begin{equation}
        \begin{aligned}
            U_j(\gamma; \sigma) = F(\theta_j) + \gamma^\intercal \dot F(\theta_j) +& 
            \begin{cases}
                \frac{\sigma}{3}s^3 & s \geq 0 \\
                -\frac{\sigma}{3}s^3 & s < 0
            \end{cases},
            \text{\quad}
            \dot U_j(\gamma; \sigma) = \dot F(\theta_j) + 
            \begin{cases}
                \sigma s^2 & s \geq 0 \\
                -\sigma s^2 & s < 0
            \end{cases}\\
            &\ddot U_k(\gamma; \sigma) = 
            \begin{cases}
                2\sigma s & s \geq 0\\
                -2\sigma s & s < 0
            \end{cases}
        \end{aligned}
    \end{equation}
    One can verify that two possible minimizers are $\sqrt{-\dot F(\theta_j)/\sigma}, -\sqrt{\dot F(\theta_j)/\sigma}$, only one of which is in $\mathbb{R}$ depending on the sign of the gradient. We now proceed to prove the statement in the proposition by induction. By construction, $\theta_0 = S_0$, so that $B_0 = \ddot F(S_0) = 0$ and $\dot F(S_0) = -\sigma_0 < 0$. To compute $\gamma^0_0$, we globally minimize $U_k(\gamma; \sigma^0_0)$, which by above yields $\gamma^0_0 = \sqrt{-\dot F(\theta_0)/\sigma^0_0} = 1$. Therefore, since $\sigma^0_0 = \sigma_0$
    \begin{equation}
        F(\theta_0) - F(\theta_0 + \gamma_0) = -F(S_1) = \frac{2(\eta_2+1)\sigma_0}{3} \text{,\quad} F(\theta_0) - U_0(\gamma^0_0; \sigma_0) = \frac{2\sigma_0}{3}.
    \end{equation}
    This implies that $\rho^0_0 = (\eta_2+1) > \eta_2$, and we accept $\theta_1 = S_1$ using two objective function evaluation ($F(\theta_0)$ and $F(S_1)$). Additionally, since $\rho^0_0 > \eta_2$ we can set $\sigma_1 = \sigma^0_0 = \sigma_0$. If $J = 1$ we are done, otherwise suppose for $j \in \{1,...,J-1\}$, $\theta_j = S_j$, $\sigma_j = \sigma_0$, and the algorithm has taken a total of $2^j$ objective evaluations. We generalize to the case of $j+1$ by showing that
    \begin{equation}
        \theta_j + \delta^{2^j-1} \frac{1}{\sqrt{\sigma_0}} \sqrt{-\dot F(\theta_j)} = \theta_j + \delta^{2^j-1} \delta^{j+2-2^{j+1}} = S_j + \delta^{j+1-2^j} = S_{j+1}
    \end{equation}
    is the accepted iterate. To do this, we prove that for all $\ell \in \{0,1,...,2^j-2\}$, the trial step $\gamma_\ell^j$ results in a ratio $\rho_{\ell}^j < \eta_1$. First, for any $\ell \in \{0,1,...,2^j-2\}$, since $\dot F(\theta_j) = \dot F(S_j) < 0$ and $\ddot F(\theta_j) = \ddot F(S_j) = 0$ the solution of $\arg\min U_j(\gamma; \delta_2^\ell \sigma_0)$ is $\gamma_\ell^j = \sqrt{\frac{-\dot F(\theta_j)}{\delta_2^\ell \sigma_0}} = \delta^\ell \sqrt{\frac{-\dot F(\theta_j)}{\sigma_0}}$ using the identity $\delta = \frac{1}{\sqrt{\delta_2}}$. We now proceed by induction. For the base case, suppose $\ell = 0$, then our penalty parameter is $\sigma_0^j = \sigma_0$ by the inductive hypothesis. Therefore, the trial step is $\gamma_0^j = \sqrt{\frac{-\dot F(\theta_j)}{\sigma_0}}$ and using the fact that $\delta^{2^j-2} \leq 1$, we obtain that $\theta_j + \gamma_0^j$ satisfies
    \begin{equation}
        S_{j+1} + \frac{1-\delta}{2} \leq S_j + \delta^{2^j-2}\delta^{j+2-2^{j+1}} \leq \theta_j + \sqrt{\frac{-\dot F(\theta_j)}{\sigma_0}} \leq S_j + \delta^{j+2-2^{j+1}} \leq S_{j+2} - \frac{1-\delta}{2}.
    \end{equation}
    This implies that $F(\theta_j + \gamma_0^j)  = 0$. Since $F(S_j) < 0$ this implies $\rho_0^j < 0$, so the algorithm rejects the iterate and inflates the penalty parameter. Since $\sigma_1^j$ can be any value within the interval $[\delta_1\sigma_0^j, \delta_2\sigma_0^j]$, let $\sigma_{1}^j = \delta_2\sigma_0^j = \delta_2\sigma_0$. The inductive step is handled in exactly the same way, therefore all trial iterates $\theta_j + \gamma_\ell^j$ are rejected for $\ell \in \{0,1,...,2^j-2\}$. Finally, the iterate $S_{j+1}$ is accepted, because after $\ell = 2^j-2$ the penalty parameter is increased to $\delta_2^{2^j-1} \sigma_0$, so by letting $\gamma_{2^j-1}^j =  \delta^{2^j-1} \sqrt{\frac{-\dot F(\theta_j)}{\sigma_0}}$ we obtain
    \begin{equation}
        \frac{F(S_j) - F(S_{j+1})}{F(S_j) - U_j\left( \gamma_{2^j-1}^j; \delta^{2^j-1}\sigma_0 \right)} = \frac{\frac{2(\eta_2+1)\sigma_0}{3} \left( \delta^{j+2 - 2^{j+1}} \right)^{3} }{ \delta^{2^j-1} \frac{2}{3\sqrt{\sigma_0}} \sigma_0^{3/2} \left( \delta^{j+2 - 2^{j+1}} \right)^{3} } = \frac{\eta_2+1}{\delta^{2^j-1}} > \eta_2
    \end{equation}
    Since $\rho_{2^j-1}^j > \eta_2$, we can reset $\sigma_{j+1} = \sigma_0$. To finish the original inductive proof, we have just shown to accept $\theta_{j+1}$, \cref{alg:acr} requires $2^j$ objective function evaluations (excluding $F(\theta_j)$ as it was calculated the previous iteration). Therefore, in total, the algorithm has taken $2^{j+1}$ function evaluations upon acceptance of $\theta_{j+1}$, as by the inductive hypothesis the algorithm has taken $2^j$ to accept $\theta_j$.
\end{proof}

\subsection{Lipschitz Constant Line Search Methods} \label{subsec:lipschitz-line-search}
Lastly, we consider Lipschitz Constant Line Search Methods. An example of such a procedure is shown in \cref{alg:dynamic_method} which was studied in \cite{curtis2018exploiting}. This class of methods usually utilize conservative upper bound models for the objective using Lipschitz constants, and try to find the smallest such constant that satisfies the model through line search. The step is then based off this estimated parameter. Another method in this family was introduced in \cite{nesterov2012GradientMF}, and can be handled similarly.
\bigskip
\begin{breakablealgorithm}
    \caption{Dynamic Method, \cite[see][Algorithm 2]{curtis2018exploiting}}
    \label{alg:dynamic_method}
    \begin{algorithmic}[1]
        \Require $\delta_1 \in (1,\infty)$, $L_0 \in (0, \infty), \sigma_0 \in (0, \infty)$, $\theta_0 \in \mathbb{R}^n$
        \Require $\mathrm{NegativeCurvatureDirection()}$
        \For{$k = 0,1,2,...$}
            \If{$\ddot F(\theta_k) \succeq 0$}
                \State $s'_k \leftarrow 0$
            \Else
                \State $s'_k \leftarrow \mathrm{NegativeCurvatureDirection()}$ \Comment{We allow any negative curvature direction}
            \EndIf
            \State $s_k \leftarrow -\dot F(\theta_k)$
            \State $m_k(L) \leftarrow -\dot F(\theta_k)^\intercal s_k / (L \inlinenorm{s_k}_2^2)$ \Comment{Gradient step size}
            \State $c_k \leftarrow (s'_k)^\intercal \ddot F(\theta_k)s'_k$
            \State $m'_k(\sigma) \leftarrow \left( -c_k + \sqrt{c_k^2-2\sigma\inlinenorm{s'_k}_2^3 \dot F(\theta_k)^\intercal s'_k}  \right)/(\sigma \inlinenorm{s'_k}_2^3)$ \Comment{Negative curvature step size; $0$ if $s_k' = 0$}
            
            \State $U_k(m; L) \leftarrow m\dot F(\theta_k)^\intercal s_k + \frac{1}{2} L m^2 \inlinenorm{s_k}_2^2$
            \State $U_k'(m'; \sigma) \leftarrow m'\dot F(\theta_k)^\intercal s'_k + \frac{1}{2} (m')^2 (s'_k)^\intercal \ddot F(\theta_k) s_k' + \frac{\sigma}{6}(m')^3 \inlinenorm{s'_k}_2^3$
            \State $\ell, L_\ell^k, \sigma_\ell^k \leftarrow 0, L_k, \sigma_k$
            \While{ \text{true} }
                \If{ $U_k(m_k(L_\ell^k); L_\ell^k) \leq U_k'(m'_k(\sigma_\ell^k); \sigma_\ell^k)$ }
                    \If{$F(\theta_k + m_k(L_\ell^k)s_k) \leq F(\theta_k) + U_k(m_k(L_\ell^k); L_\ell^k)$}
                        \State $\gamma_k \leftarrow m_k(L_\ell^k)s_k$
                        \State $\textbf{exit loop}$
                    \Else
                        \State $L_{\ell+1}^k \leftarrow \delta_1 L_\ell^k$
                    \EndIf
                \Else
                    \If{$F(\theta_k + m'_k(\sigma_\ell^k)s'_k) \leq F(\theta_k) + U_k'(m'_k(\sigma_\ell^k); \sigma_\ell^k)$}
                        \State $\gamma_k \leftarrow m'_k(\sigma_\ell^k)s'_k$
                        \State $\textbf{exit loop}$
                    \Else
                        \State $\sigma_{\ell+1}^k \leftarrow \delta_1 \sigma_\ell^k$
                    \EndIf
                \EndIf
            \State $\ell \leftarrow \ell + 1$
            \EndWhile
            \State $\theta_{k+1} \leftarrow \theta_{k} + \gamma_k$
            \State $L_{k+1} \in (0, L_\ell^k]$
            \State $\sigma_{k+1} \in (0, \sigma_\ell^k]$
        \EndFor
    \end{algorithmic}
\end{breakablealgorithm}
\bigskip

We now provide a class of objective functions such that there is an exponential increase in objective function evaluations between accepted iterates up to the $J$th acceptance.

\begin{proposition}
    Let $\delta_1 \in (1,\infty)$, $L_0 \in (0, \infty), \sigma_0 \in (0, \infty)$, and $J \in \mathbb{N}$. Set $\delta = \frac{1}{\delta_1}$, and $S_0 = 0$; for $j = 1,...,J$, let $S_j = S_{j-1} + \delta^{j-2^{j-1}}$; and, for $j = 0,\ldots,J$
    \begin{equation}
        (f_j, f_j', f_j'') = 
        \begin{cases}
            \left( 0, -L_0, d_0' \right) & j = 0 \\
            \left(-\frac{L_0}{2} \sum_{k = 0}^{j-1} \delta^{2k + 4 - 2^{k+2}}, -L_0\delta^{j+2-2^{j+1}}, d_j' \right) & j = 1,\ldots, J
        \end{cases},
    \end{equation}
    where $d_j' \in \mathbb{R}_{\ge 0}$ is any non-negative real number. Let $\Delta = (1-\delta)/2$, and $F_J : \mathbb{R}\to\mathbb{R}$ be defined as in \cref{eqn-ee-objective}. Let $\theta_0 = 0$ and let $\{\theta_1,...,\theta_J\}$ be generated by \cref{alg:dynamic_method}. Then, the total number of objective function evaluations taken by \cref{alg:dynamic_method} upon acceptance of iterate $\theta_j$ is $2^j$ for $j = 1,...,J$.
\end{proposition}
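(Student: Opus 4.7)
My plan is to mirror the inductive structure used in \cref{result-ee-armijo} and the two subsequent propositions for \cref{alg:crn} and \cref{alg:acr}. The first step is to verify that the constructed data $(\{S_j\}, \{(f_j,f_j',f_j'')\}, \Delta)$ satisfy the hypotheses of \cref{prop-ee-properties}: strict monotonicity of $\{S_j\}$ follows from $\delta^{j-2^{j-1}} > 0$, and the intervals $[S_j-\Delta,S_j+\Delta]$ are disjoint because $S_{j+1}-S_j = \delta^{j+1-2^j} \geq 1-\delta = 2\Delta$ for all $j+1\in\mathbb{N}$. This immediately gives the interpolation values at each $S_j$, and ensures $F_J$ vanishes on every bridge $[S_j+\Delta, S_{j+1}-\Delta]$.

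Next, I would exploit that at every $\theta_j = S_j$ the Hessian $\ddot F_J(S_j)=d_j'\geq 0$, so the algorithm sets $s_j'=0$, forces $U_j'(\,\cdot\,;\sigma)\equiv 0$, and therefore always enters the gradient branch of the inner loop (since $U_j(m_j(L);L) = -\inlinenorm{s_j}_2^2/(2L) < 0 = U_j'(m_j'(\sigma);\sigma)$). Combined with $s_j = -\dot F_J(S_j) = L_0\,\delta^{j+2-2^{j+1}}$ and $m_j(L)=1/L$, the $\ell$-th trial iterate reduces to
\begin{equation}
\theta_j + m_j(L_\ell^j)\,s_j \;=\; S_j + \delta^{\ell}\,\delta^{j+2-2^{j+1}} \;=\; S_j + \delta^{\ell + j + 2 - 2^{j+1}},
\end{equation}
using the identity $\delta = 1/\delta_1$ and the update $L_{\ell+1}^j = \delta_1 L_\ell^j$. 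A routine index check shows this equals $S_{j+1}$ precisely when $\ell = 2^j-1$.

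The induction hypothesis is that $\theta_j = S_j$, $L_j = L_0$ (using the freedom $L_{j} \in (0, L_{2^{j-1}-1}^{j-1}]$ to reset after acceptance), and that a total of $2^j$ objective evaluations have been expended. The inductive step has three checks. First, for each $\ell \in \{0,\ldots,2^j-2\}$, the trial point lies in $[S_{j+1}+\Delta, S_{j+2}-\Delta]$: the upper bound reduces to $\delta^{j+1-2^j}\geq (1-\delta)/2$ (clear since $j+1-2^j\leq 1$ so $\delta^{j+1-2^j}\geq \delta$), and the lower bound reduces to $\delta^{j-2^j}\geq 1/2$ (clear since $j-2^j\leq 0$). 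Hence $F_J$ vanishes at each rejected trial and, since $F_J(S_j)<0$ and $U_j<0$, the acceptance inequality is violated. Second, at $\ell = 2^j-1$, the trial lands on $S_{j+1}$ and I verify the inequality $F_J(S_{j+1}) - F_J(S_j) \leq U_j(m_j(L_{2^j-1}^j); L_{2^j-1}^j)$ by a direct comparison: the LHS is $-\tfrac{L_0}{2}\delta^{2j+4-2^{j+2}}$ and the RHS is $-\tfrac{L_0}{2}\delta^{2^j-1+2j+4-2^{j+2}}$, and since $\delta<1$ and the exponent on the right is larger, the inequality holds (with equality at $j=0$, which handles the base case). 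Third, counting evaluations: the $j=0$ iteration accepts on its first trial, consuming one evaluation for $F(\theta_0)$ and one for $F(S_1)$, totaling $2=2^1$; for $j\geq 1$, the inner loop performs $2^j$ trial evaluations (rejections at $\ell=0,\ldots,2^j-2$ plus acceptance at $\ell=2^j-1$), each contributing one new objective evaluation (the prior iteration's accepted trial value is reused as $F(\theta_j)$), giving the claimed cumulative total $2^j + 2^j = 2^{j+1}$.

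The main obstacle is bookkeeping rather than ideas: keeping the exponent arithmetic $\ell + j + 2 - 2^{j+1}$ straight across the three regimes (rejected interior, accepted boundary, relation to $\Delta$), and being explicit that the residual freedom in the algorithm ($L_{k+1}\in(0,L_\ell^k]$, choice of negative curvature direction when applicable, and the tie-break in the inner loop's comparison) is always resolved in a manner consistent with the construction. Once those are handled in one unified block, the induction closes exactly as in \cref{result-ee-armijo}.
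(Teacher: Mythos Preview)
Your proposal is correct and follows essentially the same inductive approach as the paper's proof: verify the hypotheses of \cref{prop-ee-properties}, use $\ddot F_J(S_j)=d_j'\geq 0$ to force $s_j'=0$ and hence the gradient branch, show the trial points for $\ell\in\{0,\ldots,2^j-2\}$ fall in the zero region $[S_{j+1}+\Delta,S_{j+2}-\Delta]$ and are rejected, then verify acceptance at $\ell=2^j-1$ and count evaluations. One small slip: your justification for the upper bound uses $j+1-2^j\leq 1$ to conclude $\delta^{j+1-2^j}\geq\delta$, but $\delta\geq(1-\delta)/2$ need not hold for arbitrary $\delta_1>1$; the correct observation (which the paper uses implicitly) is that $j+1-2^j\leq 0$ for all $j\geq 1$, so $\delta^{j+1-2^j}\geq 1>(1-\delta)/2$.
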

\begin{proof}
    First the function $F_J$ has all the properties in \cref{prop-ee-properties}, as our components satisfy all assumptions. We now proceed by induction. By construction, $\theta_0 = S_0$ and $\ddot F(\theta_0) = \ddot F(S_0) \geq 0$, therefore $s_0' = 0$. The algorithm also sets $L_0^0 = L_0$, so by our choice of $s_0$, $m_0(L_0^0) = 1/L_0$ and $U_0(m_0(L^0_0); L^0_0) = -\dot F(\theta_0)^2 / (2L_0)$. Since $U_0(m_0(L^0_0);L^0_0) < 0 = U'_0(m_0'(\sigma^0_0);\sigma^0_0)$, the algorithm checks to see if the first order upper bound model is satisfied. Indeed,
    \begin{equation}
        F(\theta_0 - (1/L_0)\dot F(\theta_0) ) = F(S_1) = -\frac{L_0}{2} = F(S_0) - \frac{1}{2L_0} \dot F(S_0)^2 = F(S_0) + U_0(m_0(L_0^0); L_0^0).
    \end{equation}
    Therefore, $S_1$ is accepted with two objective function evaluations ($F(S_1)$ and $F(\theta_0)$). Furthermore, we can set $L_1 = L_0 \in (0,L^0_0]$. If $J = 1$ we are done, otherwise suppose that for $j \in \{1,...,J-1\}$ that $\theta_j = S_j$, $L_j = L_0$, and $2^j$ objective functions have been taken to accept $\theta_j$. We generalize to the case of $j+1$ by showing that
    \begin{equation}
        \theta_j - \frac{1}{\delta_1^{2^j-1} L_0}\dot F(\theta_j) = S_j + \delta^{2^j-1} \delta^{j+2-2^{j+1}} = S_{j+1}
    \end{equation}
    is the accepted iterate. We do this by showing that all iterations $\ell \in \{0,...,2^j-2\}$ result in a rejection, and only $L_j$ is inflated. 
    
    To start, from the inductive hypothesis we have $\ddot F(\theta_j) \geq 0$, therefore $s_j' = 0$. By our choice of $s_j$, for any $L \in \mathbb{R}$, $m_j(L) = 1/L$. This implies that for all $\ell \in \{0,...,2^j-2\}$, $U_j(m_j(\delta^\ell_1 L_0); \delta^\ell_1 L_0) = - (1/2\delta^\ell_1 L_0) \dot F(\theta_j)^2$; thus, for any $\sigma > 0$, $U_j(m_j(\delta^\ell_1 L_0); \delta^\ell_1 L_0) < 0 = U_j'(m_j'(\sigma); \sigma)$. We now show by induction that for all $\ell \in \{0,..., 2^j-2\}$, the trial iterate on iteration $\ell$ is indeed $\theta_j + m_k(\delta^\ell_1 L_0)s_k$, and it is rejected. For the base case, $\ell = 0$ and $L_0^j = L_0$, which by $U_j(m_j(L_0); L_0) < U_j'(m_j'(\sigma_j); \sigma_j)$, means the algorithm checks the first order upper bound model. Note, that the proposed iterate $\theta_j - (1/L_0^j) \dot F(\theta_j) = \theta_j - (1/L_0) \dot F(\theta_j)$ satisfies
    \begin{equation}
        S_{j+1} + \frac{1-\delta}{2} \leq S_j + \delta^{2^j-2}\delta^{j+2-2^j} \leq \theta_j - (1/L_0)\dot F(\theta_j) \leq S_j + \delta^{j+2-2^j} = S_{j+2}-\delta^{j+1-2^j} \leq S_{j+2}-\frac{1-\delta}{2}.
    \end{equation}
    This implies that $F(\theta_j - (1/L_0)\dot F(\theta_j)) = 0$, yet by the inductive hypothesis $F(\theta_j) + U_j(m_j(L_0); L_0) < 0$. Therefore, the proposed iterate is rejected, and $L_1^j = \delta_1 L_0^j = \delta_1 L_0$. The inductive step is handled in exactly the same way. 
    
    Finally, we show that for $\ell = 2^j-1$ we accept the proposed iterate. By the above inductive proof, at iteration $\ell = 2^j-1$, $L_{2^j-1}^j = \delta_1^{2^j-1}L_0$. Recall, $\theta_j - (1/\delta_1^{2^j-1} L_0)\dot F(\theta_j) = S_{j+1}$, and since $\delta^{2j+4-2^{j+2}} \geq \delta^{2^j-1}\delta^{2j+4-2^{j+2}}$ for all $j \in \mathbb{N}$, by adding the extra terms from $F(S_{j+1})$ and multiplying by $-L_0/2$ we obtain
    \begin{equation}
        \begin{aligned}
            F(S_{j+1}) = -\frac{L_1}{2} \sum_{k = 0}^{j} \delta^{2k + 4 - 2^{k+2}} &\leq -\frac{L_1}{2} \sum_{k = 0}^{j-1} \delta^{2k + 4 - 2^{k+2}} - \frac{L_1}{2}\delta^{2^j-1}\delta^{2j+4-2^{j+2}}\\
            &= F(S_j) - \frac{\delta^{2^j-1}}{2L_0} \dot F(S_j)^2 = F(S_j) + U_j(m_j(L_{2^j-1}^j); L_{2^j-1}^j).
        \end{aligned}
    \end{equation}
    Therefore, $\theta_{j+1} = S_{j+1}$ is the accepted iterate. Additionally, since $\delta_1  > 1$, we can set $L_{j+1} = L_0 \in (0, \delta_1^{2^j-1}L_0]$. Finally, the algorithm took $2^j$ evaluations to accept this point (excluding $F(\theta_j)$ as this was calculated at the previous point). By the inductive hypothesis, $2^j$ have already been taken, implying a total of $2^{j+1}$ objective function evaluations upon acceptance of $\theta_{j+1}$.
\end{proof}

\section{Conclusion} \label{sec:conclusion}
Gradient methods have experienced a growth in theoretical and methodological developments to meet the demands of data science applications. 
To guarantee convergence and produce complexity results, often restrictive smoothness conditions are assumed (e.g., global Lipschitz continuity of the gradient).
We have shown, however, that many data science applications do not satisfy global Lipschitz continuity of the gradient function or other recently developed concepts of smoothness in literature, but rather are better modeled as having locally Lipschitz continuous gradient functions.
Furthermore, we illustrate that, under local Lipschitz continuity of the gradient function, 
optimization algorithms that do not make use of the objective can have optimality gap divergence, 
and optimization algorithms that make use of the objective can require an exponentially increasing number of objective evaluations.
This illustrates that current methods for optimization in data science either face practical challenges under general smoothness conditions that more accurately reflect optimization problems arising in applications.
This motivates the need for new methods that are both reliable and that can mitigate growth in complexity.

\bibliographystyle{siamplain}
\bibliography{references}

\end{document}